\numberwithin{equation}{section}
\begin{document}
\theoremstyle{plain}
\newtheorem{theorem}{Theorem}[section]
\newtheorem{lemma}[theorem]{Lemma}
\newtheorem{corollary}[theorem]{Corollary}
\newtheorem{proposition}[theorem]{Proposition}

\theoremstyle{definition}
\newtheorem{definition}[theorem]{Definition}
\newtheorem{example}[theorem]{Example}

\theoremstyle{remark}
\newtheorem{remark}{Remark}
\newtheorem{notation}{Notation}

\newcommand{\IN}{\mathbb{N}}
\newcommand{\IR}{\mathbb{R}}
\newcommand{\la}{\lambda}
\newcommand{\s}{\sum_{n=0}^{\infty}}
\newcommand{\sk}{\sum_{k=0}^n}
\newcommand{\skk}{\sum_{k=1}^n}
\newcommand{\D}{\mathcal{D}}
\newcommand{\T}{\mathbb{T}}

\numberwithin{equation}{section}
\allowdisplaybreaks[4]

\def\square{\hfill${\vcenter{\vbox{\hrule height.4pt \hbox{\vrule width.4pt
height7pt \kern7pt \vrule width.4pt} \hrule height.4pt}}}$}

\title[]
{\small Notes on $q$-partial differential equations for $q$-Laguerre polynomials and little $q$-Jacobi polynomials}


\author{Qi Bao$^{1,\ast}$}

\address{Qi Bao$^{1}$, $^{1}$School of Mathematical Sciences, East China Normal University, Shanghai, 200241, China}

\email{52205500010@stu.ecnu.edu.cn}

\author{DunKun Yang$^{2}$}

\address{DunKun Yang$^{2}$, $^{2}$School of Mathematical Sciences, East China Normal University, Shanghai, 200241, China}

\email{52265500003@stu.ecnu.edu.cn}

\subjclass[2010]{05A30, 11B65, 32A05, 33D15, 33D45, 39A13}

\keywords{$q$-Laguerre polynomial; little $q$-Jacobi polynomial; $q$-partial differential equations; generating function; $q$-integral; operator}

\thanks{$^{\ast}$Corresponding author. Email address: 52205500010@stu.ecnu.edu.cn}

\begin{abstract}
We define two common $q$-orthogonal polynomials: homogeneous $q$-Laguerre polynomials and homogeneous little $q$-Jacobi polynomials. They can be viewed separately as solutions to two $q$-partial differential equations. Then, we proved that if an analytic function satisfies a certain system of $q$-partial differential equations, if and only if it can be expanded in terms of homogeneous $q$-Laguerre polynomials or homogeneous little $q$-Jacobi polynomials. As applications, we obtain generalizations of the Ramanujan $q$-beta integrals and Andrews-Askey integrals. Additionally, we present an operator representation of $q$-Laguerre polynomials that facilitates the computation of identities involving $q$-Laguerre polynomials.
\end{abstract}











\maketitle

\section{Introduction}

The presence of orthogonal polynomials is ubiquitous in various problems encountered in classical mathematical physics. For instance, the Hermite polynomials manifest in the quantum mechanical treatment of harmonic oscillators, while the Laguerre polynomials arise in the propagation of electromagnetic waves. However, the study of $q$-orthogonal polynomials is also a crucial study topic and can be found in relevant literature \cite{Al1965,Anne1994,Heine1878,Lebedev}.

Throughout the paper, it is supposed that $0<|q|<1$, which ensures that all the sums and products appear in the paper converge, and denote by $\IN$ ($\mathbb{C}$) the set of positive integers (complex numbers, respectively). The $q$-shifted factorials are defined as
\begin{align*}
(a;q)_0=1, \,\, (a;q)_n=\prod_{k=0}^{n-1}(1-aq^k), \,\,
(a;q)_{\infty}=\prod_{k=0}^{\infty}(1-aq^k).
\end{align*}
For any function $f(x)$ of one variable, the $q$-derivative of $f(x)$ with respect to $x$ is defined by
\begin{align*}
\D_q \{f(x)\}=\frac{f(x)-f(qx)}x.
\end{align*}
According to the above definition, it is not difficult to verify
\begin{align}
\D_q \{f(x)g(x)\}&= \D_q \{f(x)\}g(x)+f(qx) \D_q \{g(x)\}
 \quad \label{Dfg}
\end{align}
and the Leibniz rule for the product of two functions
\begin{align}
\D_q^n \{f(x)g(x)\}&=\sk \begin{bmatrix} n \\k \end{bmatrix}q^{k(k-n)}
\D_q^k \{f(x)\} \D_q^{n-k} \{g(q^k x)\} \label{Dfg1},
\end{align}
where
\begin{align}\label{binomial}
\begin{bmatrix} n \\k \end{bmatrix}
=\frac{ (q;q)_n }{ (q;q)_k (q;q)_{n-k} },
\,\, 0\leq k \leq n, \,\,  n\in\IN
\end{align}
is the Gaussian binomial coefficients, also see \cite{GR}. For any real number $r$, the $q$-shift operator $\eta_{x_i}^r$ is defined by
\begin{align*}
\eta_{x_i}^r \{ f(x_1,\cdots,x_n) \}=f(x_1,\cdots,x_{i-1},q^r x_i,x_{i+1},\cdots,x_n).
\end{align*}
Generalizing Heine's series, or basic hypergeometric series ${}_r\phi_s$ is defined by
\begin{align}\label{rs}
{}_r\phi_s \left( \begin{gathered}
a_1, a_2, \cdots , a_r \\ b_1, b_2, \cdots , b_s \end{gathered}
;\,q, z \right)=\s \frac{ (a_1;q)_n  \cdots (a_r;q)_n  }
{ (b_1;q)_n  \cdots (b_s;q)_n (q;q)_n}
\left[ (-1)^n q^{ \binom n2 } \right]^{1+s-r} z^n.
\end{align}
Here and in what follows, $\binom nk$ represents the standard combination symbol. The series ${}_r\phi_s$ terminates if one of the numerator parameters is of the form $q^{-n}$, $n\in \IN \cup \{0\}$ and $q\neq0$. If $0<|q|<1$, the series ${}_r\phi_s$ converges absolutely
for all $x$ if $r\leq s$ and for $|x|<1$ if $r=s+1$. The famous $q$-binomial theorem
\begin{align}\label{binomialTh}
{}_1\phi_0\left( \begin{gathered} a \\ - \end{gathered} ;\,q, z \right)
=\s \frac{(a;q)_n}{(q;q)_n}z^n
= \frac{(az;q)_{\infty}}{(z;q)_{\infty}}, \,|z|<1,
\end{align}
is a $q$-analogue of Newton's binomial series. This theorem can also derive the following two identities
\begin{align}\label{binomialTh1}
\s \frac{z^n}{(q;q)_n}= \frac{1}{(z;q)_{\infty}},\, |z|<1,  \quad
\s \frac{(-1)^n q^{\binom n2}}{(q;q)_n}z^n= (z;q)_{\infty}.
\end{align}

The theory of basic hypergeometric series has been greatly developed for more than a century, and there are many effective ways to study it, such as the Wilf-Zeilberg algorithm, transformation, inversion and operator, for example, see \cite{WZ1992,Chu1993,GR,AA1993,AHD1977,A1970,ChenLiu1997,ChenLiu1998,Wang2009,C2013}. Ten years ago, Liu first introduced the $q$-partial differential equation method to study $q$-series. This novel approach has garnered attention from numerous mathematicians, refer to \cite{L2015,L2010,L2013,L2011} for further details. To this end, we initially define the $q$-partial derivative \cite{L2013}.
\begin{definition}
A $q$-partial derivative of a function of several variables is its $q$-derivative with respect to one of those variables, regarding other variables as constants.
\end{definition}
For convenience, the $q$-partial derivative of a function $f$ with respect to the variable $x$ is denoted by $\D_{q,x}\{f\}$. In \cite{L2013}, Liu  proved the following theorem.
\begin{theorem}
If $f(x,y)$ is a two-variable analytic function at $(0,0)\in \mathbb{C}^2$, then, $f$ can be expanded in terms of homogeneous Rogers-Szeg\"o polynomials (for definition see (\ref{q-Hahn})) if and only if $f$ satisfies the $q$-partial differential equation $\D_{q,x} \{f\}=\D_{q,y}\{f\}$.
\end{theorem}

We should point out that the above theorem has developed a new theory for calculating the $q$-identity and demonstrated its universality when applied to many types of $q$-orthogonal polynomials, including Rogers-Szeg\"o polynomials, Hahn polynomials, Stieltjes-Wigert polynomials and Askey-Wilson polynomials, as well as classical orthogonal polynomials such as Hermite polynomials (cf. \cite{L2017}). Later, some related works by Abdlhusein, Arjika, Cao, Jia, Li, Niu and Zhang also fall into Liu's theory. Readers interested can see \cite{NL2018,C2014,C2016,C2017,CN2016,A2020,A2016,Cao2020,Cao2021,Jia,Z2020}.

Hahn \cite{Hahn} first discovered the $q$-Laguerre polynomials, which belong to the Askey-scheme of basic hypergeometric orthogonal polynomials, according to Koekoek and Swarttouw \cite{KS1998}, they are defined by
\begin{align}\label{q-Laguerre}
\mathcal{L}^{(\alpha)}_n(x)=\frac{ (q^{\alpha+1};q)_n }{ (q;q)_n } {}_1\phi_1\left(
\begin{gathered}
q^{-n} \\ q^{ \alpha+1 }
\end{gathered}
;\,q, -q^{n+\alpha+1}x \right),  \alpha>-1.
\end{align}

Askey pointed out \cite{Askey1986} that the $q$-Laguerre polynomials converge to the Stieltjes-Wigert polynomials for $\alpha\to \infty$ thus the $q$-Laguerre polynomials are sometimes called the generalized Stieltjes-Wigert polynomials \cite{KS1998}. The explicit form of $q$-Laguerre polynomials can write as
\begin{align}\label{q-Laguerre(x)}
\mathcal{L}^{(\alpha)}_n(x)=\frac{ (q^{\alpha+1};q)_n }{ (q;q)_n }
\sum_{k=0}^n \begin{bmatrix} n \\k \end{bmatrix}
\frac{ q^{k^2+k \alpha} }{ (q^{\alpha+1};q)_k  } (-x)^k.
\end{align}

To study $q$-Laguerre polynomials from the perspective of $q$-partial differential equations following Liu's ideas, it is necessary to introduce homogeneous $q$-Laguerre polynomials
\begin{align}\label{poly1}
L_n^{(\alpha)}(x,y)=\sk \begin{bmatrix} n \\k \end{bmatrix}
\frac{  q^{ k^2+k\alpha } }{ (q^{\alpha+1};q)_k } (-x)^k y^{n-k},  \alpha>-1.
\end{align}
Obviously,
\begin{align*}
L_n^{(\alpha)}(x,y)=\frac{ (q;q)_n }{ (q^{\alpha+1};q)_n } y^n\mathcal{L}^{(\alpha)}_n (x/y), \,\,
L_n^{(\alpha)}(x,1)=\frac{ (q;q)_n }{ (q^{\alpha+1};q)_n } \mathcal{L}^{(\alpha)}_n(x), \,\,
L_n^{(\alpha)}(0,y)=y^n.
\end{align*}

This paper is organized as follows. Section \ref{Sec2} shows that if an analytic function satisfies a system of $q$-partial differential equations, if and only if it can be expanded in terms of homogeneous $q$-Laguerre polynomials (refer to Theorem \ref{thLn}). Section \ref{Sec3} is an application of Theorem \ref{thLn}, where we use the method of $q$-partial differential equations to prove the generating functions of homogeneous $q$-Laguerre polynomials with different weights. Section \ref{Sec4} presents that an analytic function can be expanded in terms of homogeneous little $q$-Jacobi polynomials (see Theorem \ref{th2}) if and only if it satisfies a system of $q$-partial differential equations. In section \ref{Sec6}, we obtain some identities by applying Theorems \ref{thLn} and \ref{th2}, which generalize famous formulas such as Ramanujan $q$-beta integrals and Andrews-Askey integrals. Section \ref{Sec5} will provide an operator representation of $q$-Laguerre polynomials, through which we can obtain bilinear generating functions for $q$-Laguerre polynomials.

\section{Homogeneous $q$-Laguerre polynomials and $q$-partial differential equation}\label{Sec2}
Firstly, Proposition \ref{proposition1} presents an important property of homogeneous $q$-Laguerre polynomials.
\begin{proposition}\label{proposition1}
For $n\in \IN \cup \{0\}$, the homogeneous $q$-Laguerre polynomials $L_n^{(\alpha)}(x,y)$ satisfy the $q$-partial differential equation
\begin{align}\label{equation-L_n}
\D_{q,x} (1- q^{\alpha} \eta_x) \left\{ L_n^{(\alpha)}(x,y) \right\}
=-q^{\alpha+1} \eta^2_x  \D_{q,y}  \left\{ L_n^{(\alpha)}(x,y) \right\},
\end{align}
namely,
\begin{align*}
\D_{q,x} \left\{ L_n^{(\alpha)}(x,y)-q^{\alpha} L_n^{(\alpha)}(qx,y) \right\}
= -q^{\alpha+1} \D_{q,y}  \left\{  L_n^{(\alpha)}(q^2x,y) \right\}.
\end{align*}
\end{proposition}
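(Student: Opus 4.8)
The plan is to verify the identity directly from the monomial expansion of $L_n^{(\alpha)}(x,y)$, since every operator occurring in \eqref{equation-L_n} acts diagonally on monomials. Writing $c_k = (-1)^k \begin{bmatrix} n \\k \end{bmatrix}\frac{q^{k^2+k\alpha}}{(q^{\alpha+1};q)_k}$ so that $L_n^{(\alpha)}(x,y)=\sk c_k x^k y^{n-k}$, I would first record the three elementary rules $\D_x\{x^k\}=(1-q^k)x^{k-1}$, $\eta_x\{x^k\}=q^k x^k$ and $\D_y\{y^m\}=(1-q^m)y^{m-1}$. Note that the product rule \eqref{Dfg} is not needed here, because each operator is applied to pure monomials rather than to a genuine product of two functions.

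For the left-hand side, applying $(1-q^{\alpha}\eta_x)$ to the term $c_k x^k y^{n-k}$ produces $c_k(1-q^{\alpha+k})x^k y^{n-k}$, and then $\D_x$ yields $c_k(1-q^{\alpha+k})(1-q^k)x^{k-1}y^{n-k}$; the factor $(1-q^k)$ kills the $k=0$ term. For the right-hand side, $\D_y$ followed by $\eta_x^2$ sends $c_k x^k y^{n-k}$ to $-q^{\alpha+1}c_k q^{2k}(1-q^{n-k})x^k y^{n-k-1}$, and the factor $(1-q^{n-k})$ kills the $k=n$ term. Both sides are therefore polynomials of total degree $n-1$.

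The next step is to align the two expansions on a common monomial basis. On the left I would reindex by $k\mapsto k+1$, so that the left-hand side becomes $\sum_{j=0}^{n-1} c_{j+1}(1-q^{\alpha+j+1})(1-q^{j+1})\,x^j y^{n-1-j}$, while the right-hand side is already $\sum_{j=0}^{n-1}(-q^{\alpha+1+2j})\,c_j(1-q^{n-j})\,x^j y^{n-1-j}$. Matching the coefficient of $x^j y^{n-1-j}$ reduces the whole proposition to the single scalar identity
\[
c_{j+1}(1-q^{\alpha+j+1})(1-q^{j+1})=-q^{\alpha+1+2j}(1-q^{n-j})\,c_j,\qquad 0\le j\le n-1.
\]

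Finally I would verify this identity through the ratio $c_{j+1}/c_j$. From the definition of $c_k$, the $q$-binomial quotient \eqref{binomial} contributes $(1-q^{n-j})/(1-q^{j+1})$, the quotient $(q^{\alpha+1};q)_j/(q^{\alpha+1};q)_{j+1}$ contributes $1/(1-q^{\alpha+j+1})$, and the exponent $k^2+k\alpha$ jumps by $q^{2j+1+\alpha}$, so that $c_{j+1}/c_j=-q^{2j+1+\alpha}(1-q^{n-j})\big/\!\left[(1-q^{j+1})(1-q^{\alpha+j+1})\right]$. Substituting this makes the factors $(1-q^{j+1})$ and $(1-q^{\alpha+j+1})$ cancel on the left, leaving exactly $-q^{\alpha+1+2j}(1-q^{n-j})c_j$, as required. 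I expect no genuine obstacle: the content is entirely bookkeeping, and the only points needing care are the vanishing of the two boundary terms and the index shift $k\mapsto k+1$. A structural alternative would be to exploit the homogeneity $L_n^{(\alpha)}(x,y)=\frac{(q;q)_n}{(q^{\alpha+1};q)_n}y^n\mathcal{L}_n^{(\alpha)}(x/y)$ to recast \eqref{equation-L_n} as a one-variable $q$-difference identity for $\mathcal{L}_n^{(\alpha)}$, but the direct monomial computation is the shortest route.
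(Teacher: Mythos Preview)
Your proposal is correct and follows essentially the same route as the paper: both compute the two sides termwise on the monomial expansion, shift an index by one, and reduce the claim to a single scalar identity between neighbouring coefficients. The only cosmetic differences are that the paper shifts the index on the right-hand side (writing everything in terms of $\begin{bmatrix} n \\ k-1\end{bmatrix}$) and invokes the $q$-binomial identity \eqref{eq4} directly, whereas you shift on the left and verify the same fact by computing the ratio $c_{j+1}/c_j$.
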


\begin{proof}
Let LHS denote the left-hand side of the equation (\ref{equation-L_n}), and by using the formula $\D_{q,x} \{x^n\}=(1-q^n)x^{n-1}$, we can obtain
\begin{align*}
{\rm LHS}&=\D_{q,x} \left\{ \sk (-1)^k
\begin{bmatrix} n \\k \end{bmatrix}
\frac{  q^{ k^2+k\alpha } }{ (q^{\alpha+1};q)_{k-1} }x^k y^{n-k} \right\}\\
&=\skk  (-1)^k \begin{bmatrix} n \\k \end{bmatrix} (1-q^k)
\frac{  q^{ k^2+k\alpha } }{ (q^{\alpha+1};q)_{k-1} }x^{k-1} y^{n-k}.
\end{align*}
Similarly, use RHS to denote the right-hand side of the equation (\ref{equation-L_n}). Through simple calculation, we obtain
\begin{align*}
{\rm RHS} &=-q^{\alpha+1} \D_{q,y} \left\{ \sk (-1)^k
\begin{bmatrix} n \\k \end{bmatrix}
\frac{  q^{ k^2+k\alpha } }{ (q^{\alpha+1};q)_k }
(q^2x)^k y^{n-k} \right\} \\
&=\sum_{k=0}^{n-1}  (-1)^{k+1}
\begin{bmatrix} n \\k \end{bmatrix} (1-q^{n-k})
\frac{  q^{ (k+1)^2+(k+1)\alpha } }{ (q^{\alpha+1};q)_k }
x^k y^{n-k-1}  \\
&=\skk (-1)^k \begin{bmatrix} n \\k-1 \end{bmatrix} (1-q^{n-k+1})
\frac{  q^{ k^2+k\alpha } }{ (q^{\alpha+1};q)_{k-1} } x^{k-1} y^{n-k}.
\end{align*}
From the definition of the $q$-binomial coefficients (\ref{binomial}), it is easy to verify that
\begin{align}\label{eq4}
\begin{bmatrix} n \\k \end{bmatrix}  (1-q^k)
=\begin{bmatrix} n \\k-1 \end{bmatrix} (1-q^{n-k+1}).
\end{align}
It follows from (\ref{eq4}) that ${\rm LHS=RHS}$.
\end{proof}
In order to prove Theorem \ref{thLn}, we need the following proposition (for example,
see \cite[p.5]{M1984}).
\begin{proposition}\label{pro}
If $f(x_1, x_2, \cdots, x_k)$ is analytic at the origin $(0, 0, \ldots, 0)\in\mathbb{C}^k$, then, $f$ can be expanded in an absolutely and uniformly convergent power series,
\begin{align*}
f(x_1, x_2, \ldots, x_k)
=\sum_{n_1, n_2, \ldots, n_k=0}^{\infty}
\la_{n_1,n_2, \ldots,n_k} x_1^{n_1}x_2^{n_2}\cdots x_k^{n_k}.
\end{align*}
\end{proposition}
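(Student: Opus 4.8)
The plan is to prove this classical fact via the iterated Cauchy integral formula on a polydisc, expanding the Cauchy kernel into a product of geometric series. Since $f$ is analytic at the origin, there exist radii $r_1, \ldots, r_k > 0$ such that $f$ is holomorphic on the polydisc $\{(x_1, \ldots, x_k) : |x_j| < r_j \text{ for all } j\}$. Fix a point $(x_1, \ldots, x_k)$ in this polydisc and choose radii $\rho_j$ with $|x_j| < \rho_j < r_j$ for each $j$. Applying the one-variable Cauchy formula successively in each coordinate yields the representation
\begin{align*}
f(x_1, \ldots, x_k) = \frac{1}{(2\pi i)^k} \oint_{|\zeta_1| = \rho_1} \cdots \oint_{|\zeta_k| = \rho_k} \frac{f(\zeta_1, \ldots, \zeta_k)}{(\zeta_1 - x_1) \cdots (\zeta_k - x_k)} \, d\zeta_1 \cdots d\zeta_k.
\end{align*}

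The next step is to expand the kernel. For each $j$, since $|x_j / \zeta_j| = |x_j| / \rho_j < 1$ on the contour, one has the geometric expansion
\begin{align*}
\frac{1}{\zeta_j - x_j} = \frac{1}{\zeta_j} \cdot \frac{1}{1 - x_j / \zeta_j} = \sum_{n_j = 0}^{\infty} \frac{x_j^{n_j}}{\zeta_j^{n_j + 1}}.
\end{align*}
Taking the product over $j$ and substituting into the integral, I would interchange the multiple summation with the integration to obtain $f(x_1, \ldots, x_k) = \sum_{n_1, \ldots, n_k = 0}^{\infty} \lambda_{n_1, \ldots, n_k} \, x_1^{n_1} \cdots x_k^{n_k}$, where the coefficients are given explicitly by
\begin{align*}
\lambda_{n_1, \ldots, n_k} = \frac{1}{(2\pi i)^k} \oint_{|\zeta_1| = \rho_1} \cdots \oint_{|\zeta_k| = \rho_k} \frac{f(\zeta_1, \ldots, \zeta_k)}{\zeta_1^{n_1 + 1} \cdots \zeta_k^{n_k + 1}} \, d\zeta_1 \cdots d\zeta_k.
\end{align*}

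To justify both the interchange and the claimed convergence, I would invoke the Weierstrass $M$-test. Let $M = \sup |f|$ over the compact distinguished boundary $\{|\zeta_j| = \rho_j\}$, which is finite by continuity. Writing $\theta_j = |x_j| / \rho_j < 1$, each term of the expanded integrand is bounded in absolute value by $M \prod_{j} \rho_j^{-1} \theta_j^{n_j}$, so the multiple series of integrands is dominated by the convergent product $M \prod_j \rho_j^{-1} (1 - \theta_j)^{-1} < \infty$. This domination holds uniformly for $(x_1, \ldots, x_k)$ ranging over any compact subset of the polydisc (on which the $\theta_j$ stay bounded away from $1$), which legitimizes the term-by-term integration and simultaneously delivers the absolute and uniform convergence asserted in the statement.

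The main obstacle, and the only step requiring genuine several-variable input rather than a one-variable argument applied coordinatewise, is the initial integral representation: passing from ``analytic at the origin'' to the iterated Cauchy formula presupposes that $f$ is jointly continuous and holomorphic in each variable on the polydisc. If one adopts joint holomorphy as the definition of analyticity this is immediate; if one starts only from separate holomorphy, then Hartogs' theorem is needed to supply the continuity. Since this is a standard foundational result, the author simply cites \cite[p.~5]{M1984} rather than reproducing the argument.
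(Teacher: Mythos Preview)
Your proof is correct and is exactly the standard argument via the iterated Cauchy integral formula, geometric expansion of the kernel, and the Weierstrass $M$-test. Note, however, that the paper does not actually prove this proposition at all: it is stated as a background fact with a citation to \cite[p.~5]{M1984}, precisely as you observe in your final paragraph. So there is nothing to compare---you have supplied the details that the paper deliberately outsources to the reference.
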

The principal result of this section is the following expansion theorem.
\begin{theorem}\label{thLn}
If $f(x_1, y_1,\cdots, x_k, y_k)$ is a $2k$-variable analytic function at $(0, 0, \cdots, 0)\in \mathbb{C}^{2k}$, then, $f$ can be expanded
\begin{align*}
\sum_{n_1,\cdots,n_k=0}^{\infty} \la_{n_1,\cdots,n_k} \,
L_{n_1}^{(\alpha_1)}(x_1,y_1) \cdots L_{n_k}^{(\alpha_k)}(x_k,y_k),
\end{align*}
where $\la_{n_1,\cdots,n_k}$ are independent of $x_1,y_1,\cdots,x_k,y_k$, if and only if $f$ satisfies the $q$-partial differential equations
\begin{align}\label{Eq2}
\D_{q,x_j} (1- q^{\alpha_j} \eta_{x_j}) \left\{ f \right\}
=-q^{\alpha_j+1} \eta^2_{x_j} \D_{q,y_j}   \left\{ f \right\}
\end{align}
for $j\in \{1,2,\ldots,k\}$.
\end{theorem}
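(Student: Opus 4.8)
The plan is to prove both implications, dispatching the easy necessity first and concentrating on sufficiency. Throughout, write $\mathcal{O}_j=\D_{x_j}(1-q^{\alpha_j}\eta_{x_j})+q^{\alpha_j+1}\eta_{x_j}^2\D_{y_j}$, so that the system \eqref{Eq2} reads $\mathcal{O}_j\{f\}=0$ for each $j$, and observe that Proposition \ref{proposition1} is exactly the assertion $\mathcal{O}_j\{L_{n_j}^{(\alpha_j)}(x_j,y_j)\}=0$. For the necessity direction, suppose $f$ equals the stated series. Since $\mathcal{O}_j$ acts only on the pair $(x_j,y_j)$ and annihilates the $j$-th factor of every product term, applying $\mathcal{O}_j$ term by term (legitimate by the absolute and uniform convergence afforded by Proposition \ref{pro}) immediately gives $\mathcal{O}_j\{f\}=0$.

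The heart of the matter is sufficiency, and the key case is a single pair, $k=1$. Each $L_n^{(\alpha)}(x,y)$ is homogeneous of degree $n$ in $(x,y)$, and the operator $\mathcal{O}_1$ is degree-lowering: it sends a form homogeneous of degree $n$ to one of degree $n-1$, since $\eta_x$ preserves degree while $\D_x$ and $\D_y$ drop it by one. Expanding the analytic $f$ via Proposition \ref{pro} and grouping by total degree in $(x,y)$ as $f=\sum_n f_n$, the equation $\mathcal{O}_1\{f\}=0$ then splits, by uniqueness of the homogeneous decomposition, into $\mathcal{O}_1\{f_n\}=0$ for every $n$. Writing $f_n=\sum_{k=0}^n b_k x^k y^{n-k}$ and comparing the coefficient of $x^k y^{n-1-k}$ on both sides yields the two-term recurrence
\begin{align*}
b_{k+1}(1-q^{\alpha+k+1})(1-q^{k+1})=-q^{\alpha+1+2k}(1-q^{n-k})\,b_k .
\end{align*}
Because $0<|q|<1$ and $\alpha>-1$, the factors $1-q^{k+1}$ and $1-q^{\alpha+k+1}$ never vanish, so $b_1,\dots,b_n$ are determined uniquely by $b_0$; the space of homogeneous degree-$n$ solutions is one-dimensional. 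Verifying that the coefficients of $L_n^{(\alpha)}(x,y)$ satisfy the same recurrence then gives $f_n=\lambda_n L_n^{(\alpha)}(x,y)$, and summing over $n$ produces the expansion.

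For general $k$ I would induct on $k$, using the single-pair result with the remaining variables carried along as analytic parameters. Regarding $f$ as a function of $(x_k,y_k)$, equation \eqref{Eq2} for $j=k$ lets me write $f=\sum_{n_k} A_{n_k}\,L_{n_k}^{(\alpha_k)}(x_k,y_k)$. Extracting the coefficient of $y_k^{\,n_k}$, which has coefficient $1$ in $L_{n_k}^{(\alpha_k)}$, identifies each $A_{n_k}$ with a coefficient in the power series of $f$, hence analytic in $(x_1,y_1,\dots,x_{k-1},y_{k-1})$. Since for $j<k$ the operator $\mathcal{O}_j$ commutes with this extraction and annihilates $f$, and since the $L_{n_k}^{(\alpha_k)}$ are linearly independent across degrees, each $A_{n_k}$ satisfies the remaining $k-1$ equations. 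The induction hypothesis then expands every $A_{n_k}$ in the product basis for the first $k-1$ pairs, and substituting back yields the full expansion, with coefficients $\lambda_{n_1,\dots,n_k}$ independent of all the variables.

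The main obstacle is analytic bookkeeping rather than algebra. I must justify that regrouping the power series of $f$ by homogeneous degree—and later rearranging it into the product basis—preserves absolute and uniform convergence near the origin, and that the coefficients $A_{n_k}$ peeled off at each inductive stage are genuinely analytic, so that Proposition \ref{pro} and the inductive hypothesis legitimately apply. Once the degree-lowering property of $\mathcal{O}_1$ and the nonvanishing of the recurrence denominators are established, the remaining computations are routine.
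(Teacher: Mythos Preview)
Your proposal is correct. The necessity argument and the induction on $k$ match the paper's; the genuine difference lies in the base case $k=1$. You exploit homogeneity: since each $L_n^{(\alpha)}$ is homogeneous of degree $n$ and $\mathcal{O}_1$ lowers degree by one, the equation $\mathcal{O}_1\{f\}=0$ decouples across homogeneous components, reducing the problem to a finite two-term recurrence whose one-dimensional solution space is spanned by $L_n^{(\alpha)}$. The paper instead writes $f(x_1,y_1)=\sum_m x_1^m g_m(y_1)$, derives from the PDE the relation
\[
(1-q^m)(1-q^{\alpha_1+m})\,g_m(y_1)=-q^{\alpha_1+2m-1}\,\D_{y_1}\{g_{m-1}(y_1)\},
\]
iterates it $m$ times to express $g_m$ through $\D_{y_1}^m\{g_0\}$, and then reassembles the double sum. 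Your route is more conceptual and makes the structure (a one-dimensional kernel in each degree) transparent, while the paper's is more computational but delivers the identification $\la_n=\la_{0,n}$ directly without a separate uniqueness appeal. Both arguments rest on the same nonvanishing of $(1-q^{k+1})(1-q^{\alpha+k+1})$, and the induction step---peeling off one pair of variables and recovering the coefficient functions via $L_m^{(\alpha)}(0,y)=y^m$---is essentially identical in the two proofs.
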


\begin{proof}
We employ mathematical induction. When $k=1$, it follows from Proposition \ref{pro} that $f$ can be expanded in an absolutely and uniformly convergent power series in a neighborhood of $(0,0)$. Therefore, there exists a sequence $\{\la_{m,n}\}$ independent of $x_1$ and $y_1$ for which
\begin{align}\label{f(x1,y1)1}
f(x_1,y_1)=\sum_{m,n=0}^{\infty} \la_{m,n} x_1^m y_1^n=\sum_{m=0}^{\infty} x_1^m  \s \la_{m,n} y_1^n .
\end{align}
Substituting the above equation into the following $q$-partial differential equation
\begin{align}\label{Eq3}
\D_{q,x_1} (1- q^{\alpha_1} \eta_{x_1}) \left\{ f(x_1,y_1) \right\}
=-q^{\alpha_1+1} \eta^2_{x_1} \D_{q,y_1}   \left\{ f(x_1,y_1) \right\},
\end{align}
we obtain
\begin{align}\label{Eq1}
\sum_{m=1}^{\infty} (1-q^{\alpha_1+m} )(1-q^m) x_1^{m-1}  \s \la_{m,n} y_1^n
=-q^{\alpha_1+1} \sum_{m=0}^{\infty} q^{2m} x_1^m
\D_{q,y_1} \left\{ \s \la_{m,n} y_1^n \right\}.
\end{align}
Equating the coefficients of $x_1^{m-1}$ in (\ref{Eq1}), we have
\begin{align*}
 \s \la_{m,n} y_1^n = \frac{ (-q^{\alpha_1+1}) q^{2(m-1)} }
 { (1-q^{\alpha_1+m} )(1-q^m) }
 \D_{q,y_1} \left\{ \s \la_{m-1,n} y_1^n \right\}.
\end{align*}
Iteration $m-1$ times yields
\begin{align*}
 \s \la_{m,n} y_1^n &= \frac{ (-q^{\alpha_1+1})^m q^{m(m-1)} }
 { (q;q)_m (q^{\alpha_1+1};q)_m }
 \D_{q,y_1}^m \left\{ \s \la_{0,n} y_1^n \right\} \\
 &=\frac{ (-1)^m  q^{m^2+m\alpha_1} } { (q;q)_m (q^{\alpha_1+1};q)_m }
  \s \la_{0,n} \frac{ (q;q)_n }{(q;q)_{n-m}} y_1^{n-m} \\
 &=\sum_{n=m}^{\infty} (-1)^m \la_{0,n} \begin{bmatrix} n \\m \end{bmatrix}
   \frac{ q^{m^2+m\alpha_1} }{ (q^{\alpha_1+1};q)_m } y_1^{n-m}.
\end{align*}
Noting that the series in (\ref{f(x1,y1)1}) is an absolutely and uniformly convergent series, substituting the above equation into (\ref{f(x1,y1)1}) and interchanging the order of the summation, we obtain
\begin{align*}
f(x_1,y_1)&=\sum_{m=0}^{\infty} x_1^m \sum_{n=m}^{\infty} (-1)^m \la_{0,n}  \begin{bmatrix} n \\m \end{bmatrix}
   \frac{ q^{m^2+m\alpha_1} }{ (q^{\alpha_1+1};q)_m } y_1^{n-m} \\
&=\s \la_{0,n} \sum_{m=0}^n  (-1)^m \begin{bmatrix} n \\m \end{bmatrix}
   \frac{ q^{m^2+m\alpha_1} }{ (q^{\alpha_1+1};q)_m }  x_1^m y_1^{n-m} \\
&=\s \la_{0,n} L_n^{(\alpha_1)}(x_1,y_1).
\end{align*}
The above calculation shows that the sufficiency of Theorem \ref{thLn} is correct. Conversely, if $f(x_1,y_1)$ can be expanded in terms of $L_n^{(\alpha_1)}(x_1,y_1)$, then using Proposition \ref{proposition2}, we find that $f(x_1,y_1)$ satisfies (\ref{Eq2}). So we can prove the case of $k=1$.

Now, we assume that Theorem \ref{thLn} is true for the case $k-1$. Since $f$ is analytic at $(0,0)$ and satisfies (\ref{Eq3}). Thus, there exists a sequence $\{c_{n_1}(x_2,y_2,\ldots,x_k,y_k)\}$ independent of $x_1$ and $y_1$ for which
\begin{align}\label{Eq4}
f(x_1,y_1,\ldots,x_k,y_k)=\sum_{n_1=0}^{\infty}
c_{n_1}(x_2,y_2,\ldots,x_k,y_k)  L_{n_1}^{(\alpha_1)}(x_1,y_1).
\end{align}
Putting $x_1=0$ in (\ref{Eq4})  and using $ L_{n_1}^{(\alpha_1)}(0,y_1)=y_1^{n_1}$, we obtain
\begin{align*}
f(0,y_1,x_2,y_2, \ldots, x_k,y_k)=\sum_{n_1=0}^{\infty}
c_{n_1}(x_2,y_2,\ldots,x_k,y_k) y_1^{n_1}.
\end{align*}
Using the Maclaurin expansion theorem, we have
\begin{align*}
c_{n_1}(x_2,y_2,\ldots,x_k,y_k)
=\frac{\partial^{n_1} f(0,y_1,x_2,y_2,\ldots,x_k,y_k)}
{ n_1! \partial {y_1}^{n_1}} \Big|_{y_1=0}.
\end{align*}
Since $f(x_1,y_1,\ldots,x_k,y_k)$ is analytic near $(x_1,y_1,\ldots,x_k,y_k)=(0,\ldots,0)\in \mathbb{C}^{2k}$, it follows from the above equation that $c_{n_1}(x_2, y_2, \ldots, x_k, y_k)$ is analytic near $(x_2,y_2,\ldots,x_k,y_k)=(0,\ldots,0)\in \mathbb{C}^{2k-2}$. Substituting (\ref{Eq4}) into (\ref{Eq2}), we find that for $j=2,\ldots,k$,
\begin{align*}
 & \sum_{n_1=0}^{\infty}  \D_{q,x_j} (1- q^{\alpha_j} \eta_{x_j}) \left\{
c_{n_1}(x_2,y_2,\ldots,x_k,y_k) \right\} L_n^{(\alpha_1)}(x_1,y_1)\\
&=\sum_{n_1=0}^{\infty}
(-q^{\alpha_j+1} \eta^2_{x_j}) \D_{q,y_j}
 \left\{c_{n_1}(x_2,y_2,\ldots,x_k,y_k) \right\}
 L_n^{(\alpha_1)}(x_1,y_1).
\end{align*}
By equating the coefficients of $L_n^{(\alpha_1)}(x_1,y_1)$ in the above equation, we obtain
\begin{align*}
\D_{q,x_j} (1- q^{\alpha_j} \eta_{x_j}) \left\{
c_{n_1}(x_2,y_2,\ldots,x_k,y_k) \right\}
=-q^{\alpha_j+1} \eta^2_{x_j} \D_{q,y_j}
 \left\{c_{n_1}(x_2,y_2,\ldots,x_k,y_k) \right\}.
\end{align*}
Therefore, there exists a sequence $\la_{n_1,n_2,\ldots,n_k}$ independent of
 $x_2, y_2, \ldots, x_k, y_k$ (of course independent of $x_1$ and $y_1$) for which
\begin{align*}
c_{n_1}(x_2,y_2,\ldots,x_k,y_k)
=\sum_{n_2,\ldots,n_k=0}^{\infty} \la_{n_1,n_2,\ldots,n_k}
L_{n_2}^{(\alpha_2)}(x_2,y_2) \cdots L_{n_k}^{(\alpha_k)}(x_k,y_k).
\end{align*}
Then substituting this equation into (\ref{Eq4}), we proved the sufficiency of Theorem \ref{thLn}. Conversely, if $f$ can be expanded in terms of $L_{n_1}^{(\alpha_1)}(x_1,y_1) \cdots L_{n_k}^{(\alpha_k)}(x_k,y_k)$, it follows from Proposition \ref{proposition1} that $f$ satisfies (\ref{Eq2}). This completes the proof.
\end{proof}

\begin{remark}
Theorem \ref{thLn} shows that an analytic function can be expanded in terms of homogeneous $q$-Laguerre polynomials, and its essence is to satisfy  $q$-partial differential equation (\ref{Eq2}). Theorem \ref{thLn} implies that all solutions to $q$-partial differential equation (\ref{Eq2}) can be represented as linear combinations of homogeneous $q$-Laguerre polynomials. This theorem is a powerful tool for proving formulas involving the homogeneous $q$-Laguerre polynomials. Its applications are discussed in Sections \ref{Sec3} and \ref{Sec6}.
\end{remark}

\section{Generating functions for homogeneous $q$-Laguerre polynomials}\label{Sec3}

Since the Stieltjes and Hamburger moment problems corresponding to the $q$-Laguerre polynomials are indeterminate there exist many different weight functions, see \cite{Anne1994,Ismail1998,C2003,Moak1981} for details. Theorem \ref{generating1} will use Theorem \ref{thLn} to prove the following generating functions of homogeneous $q$-Laguerre polynomials with different weights. We often refer to the following theorem
(see \cite[p. 28]{Taylor2002}) to determine if a given function is an analytic function in several complex variables.
\begin{theorem}\label{Hartog}
(Hartog's theorem) If a complex valued function $f(z_1,z_2,\cdots,z_n)$
is holomorphic (analytic) in each variable separately in a domain $U\in \mathbb{C}^n$, then, it is holomorphic (analytic) in $U$.
\end{theorem}

\begin{theorem}\label{generating1}
(1) We have
\begin{align}\label{Eq0}
\s \frac{(-1)^n q^{\binom n2} } {(q;q)_n} L_n^{(\alpha)}(x,y) t^n
=(ty;q)_{\infty}\, {}_0\phi_2\left(
\begin{gathered} - \\ q^{ \alpha+1 },  ty \end{gathered}
;\,q, -q^{\alpha+1} tx \right).
\end{align}
(2) For arbitrarily given $\gamma$, and for $|ty|<1$, we have
\begin{align}\label{Eq5}
\s \frac{(\gamma ;q)_n} {(q;q)_n} L_n^{(\alpha)}(x,y) t^n
=\frac{ (\gamma ty;q)_{\infty} }{ (ty;q)_{\infty} } {}_1\phi_2\left(
\begin{gathered}
\gamma \\ q^{ \alpha+1 }, \gamma ty
\end{gathered}
;\,q, -q^{\alpha+1} tx \right).
\end{align}
\end{theorem}

\begin{proof}
For part (1), denote the right-hand side of (\ref{Eq0}) by $f(x,y)$. It follows from Theorem \ref{Hartog} that $f(x,y)$ is an analytic function of $x$ and $y$. Thus $f(x,y)$ is analytic at $(0,0)\in \mathbb{C}^2$. On the one hand, we have
\begin{align*}
\D_{q,x} (1- q^{\alpha} \eta_x) \left\{ f(x,y) \right\}
=-tq^{\alpha+1}(ty;q)_{\infty}
\s \frac{ [(-1)^{n+1} q^{\binom {n+1}{2}}]^3 }
  { (q^{\alpha+1};q)_n (ty;q)_{n+1}  (q;q)_n } (-q^{\alpha+1} xt)^n.
\end{align*}
On the other hand, according to (\ref{Dfg}),
\begin{align*}
\D_{q,y}  \left\{ f(x,y) \right\}=(ty;q)_{\infty} \s \frac{ [(-1)^n q^{\binom n2}]^3 (-tq^n)}
  { (q^{\alpha+1};q)_n (ty;q)_{n+1}  (q;q)_n } (-q^{\alpha+1} xt)^n,
\end{align*}
from which we obtain
\begin{align*}
-q^{\alpha+1} \eta^2_x  \D_{q,y}   \left\{ f(x,y) \right\}
&=tq^{\alpha+1} (ty;q)_{\infty} \s \frac{ [(-1)^n q^{\binom n2}]^3 q^{3n} }
  { (q^{\alpha+1};q)_n (ty;q)_{n+1}  (q;q)_n } (-q^{\alpha+1} xt)^n \\
&=\D_{q,x}  (1- q^{\alpha} \eta_x) \left\{ f(x,y) \right\}.
\end{align*}
Then by Theorem \ref{thLn}, there exists a sequence $\{\la_n\}$ independent of $x$ and $y$ such that
\begin{align}\label{3eq2}
(ty;q)_{\infty}\, {}_0\phi_2\left( \begin{gathered} - \\ q^{ \alpha+1 },  ty \end{gathered} ;\,q, -q^{\alpha+1} xt \right) =\s \la_n L_n^{(\alpha)}(x,y) .
\end{align}
Putting $x=0$ in the above equation, using $L_n^{(\alpha)}(0,y)=y^n$
and (\ref{binomialTh1}), we find that
\begin{align*}
\s \la_n y^n=(ty;q)_{\infty}=\s \frac{(-1)^n q^{\binom n2}}{(q;q)_n}(ty)^n.
\end{align*}
Equating the coefficients of $y^n$ in the above equation, we have $\la_n=(-1)^n q^{\binom n2}/[t^n(q;q)_n]$. Then substitute it into (\ref{3eq2}) and equation (\ref{Eq0}) follows.

For part (2), denote the right-hand side of (\ref{Eq5}) by $f(x,y)$. It follows from Theorem \ref{Hartog} that $f(x,y)$ is an analytic function of $x$ and $y$ for $|ty|<1$. Thus $f(x,y)$ is analytic at $(0,0)\in \mathbb{C}^2$. On the one hand, we have
\begin{align*}
&\quad \D_{q,x}  (1- q^{\alpha} \eta_x) \left\{ f(x,y) \right\} \\
&=\D_{q,x}   \left\{ \frac{(\gamma ty;q)_{\infty} }{ (ty;q)_{\infty} }
  \s \frac{ (\gamma ;q)_n [(-1)^n q^{\binom n2}]^2 }
   { (q^{\alpha+1};q)_{n-1} (\gamma ty;q)_n (q;q)_n }
  (-q^{\alpha+1} xt)^n \right\} \\
&=\frac{-tq^{\alpha+1} (\gamma ty;q)_{\infty} }{ (ty;q)_{\infty} } \s
\frac{ (\gamma ;q)_{n+1} [(-1)^n q^{\binom n2}]^2 q^{2n} }
  { (q^{\alpha+1};q)_n (\gamma ty;q)_{n+1}  (q;q)_n } (-q^{\alpha+1} xt)^n.
\end{align*}
On the other hand, according to (\ref{Dfg}),
\begin{align*}
\D_{q,y}  \left\{ f(x,y) \right\}=\frac{ t(\gamma ty;q)_{\infty} }{ (ty;q)_{\infty} }
  \s \frac{ (\gamma ;q)_{n+1} [(-1)^n q^{\binom n2}]^2 }
  {(q^{\alpha+1};q)_n (\gamma ty;q)_{n+1} (q;q)_n } (-q^{\alpha+1} xt)^n,
\end{align*}
from which we obtain
\begin{align*}
-q^{\alpha+1} \eta^2_x  \D_{q,y}   \left\{ f(x,y) \right\}
&=\frac{-tq^{\alpha+1} (\gamma ty;q)_{\infty} }{ (ty;q)_{\infty} } \s
\frac{ (\gamma ;q)_{n+1} [(-1)^n q^{\binom n2}]^2 q^{2n} }
  { (q^{\alpha+1};q)_n (\gamma ty;q)_{n+1}  (q;q)_n } (-q^{\alpha+1} xt)^n\\
&=\D_{q,x}  (1- q^{\alpha} \eta_x) \left\{ f(x,y) \right\}.
\end{align*}
Then by Theorem \ref{thLn}, there exists a sequence $\{\la_n\}$ independent of $x$ and $y$ such that
\begin{align}\label{3eq3}
\frac{ (\gamma ty;q)_{\infty} }{ (ty;q)_{\infty} } {}_1\phi_2\left(
\begin{gathered} \gamma \\ q^{ \alpha+1 }, \gamma ty
\end{gathered} ;\,q, -q^{\alpha+1} xt \right)
=\s \la_n L_n^{(\alpha)}(x,y) .
\end{align}
Putting $x=0$ in the above equation, using $L_n^{(\alpha)}(0,y)=y^n$
and (\ref{binomialTh}), we find that
\begin{align*}
\s \la_n y^n=\frac{ (\gamma ty;q)_{\infty} }{ (ty;q)_{\infty} }
=\s \frac{(\gamma;q)_n}{(q;q)_n}(ty)^n.
\end{align*}
Equating the coefficients of $y^n$ in the above equation, we obtain $\la_n=t^n (\gamma;q)_n/(q;q)_n$. Then substitute it into (\ref{3eq3}), which completes the proof of (\ref{Eq5}).
\end{proof}

\begin{remark}
Taking $\gamma=0$ in (\ref{Eq5}), we can obtain a simpler generating function for $L_n^{(\alpha)}(x,y)$:
\begin{align}\label{3Eq1}
\s \frac{L_n^{(\alpha)}(x,y)} {(q;q)_n} t^n
=\frac{1}{ (ty;q)_{\infty} } {}_0\phi_1\left(
\begin{gathered}
- \\ q^{ \alpha+1 }
\end{gathered}
;\,q, -q^{\alpha+1} xt \right).
\end{align}
\end{remark}

\section{Homogeneous little $q$-Jacobi polynomials and $q$-partial differential equation}\label{Sec4}

A $q$-analogue of Jacobi polynomials was introduced by Hahn \cite{Hahn} and
later studied by Andrews and Askey \cite{AA1977,AA1985}, and named by them as little $q$-Jacobi polynomials:
\begin{align}\label{q-Jacobi}
\mathcal{P}_n^{(\alpha,\beta)}(x)={}_2\phi_1\left( \begin{gathered}
q^{-n},\alpha\beta q^{n+1} \\ \alpha q \end{gathered} ;\,q,qx \right).
\end{align}
As $q\to1$, the little $q$-Jacobi polynomials tend to a multiple of Jacobi polynomials. Little $q$-Jacobi polynomials with $\beta=0$ are $q$-analogs of Laguerre polynomials and are orthogonal with respect to a discrete measure on a countable set, called little $q$-Laguerre (or Wall) polynomials. Moreover, the little $q$-Legendre polynomials are little $q$-Jacobi polynomials with $\alpha=\beta=1$. If we set $\beta\rightarrow -\alpha^{-1}q^{-1}\beta$, in the little $q$-Jacobi polynomials and then take the limit $\alpha \rightarrow 0$ we obtain the alternative $q$-Charlier polynomials. For more details about $q$-Jacobi polynomials, see \cite{KS1998}.

To establish the relationship between little $q$-Jacobi polynomials and $q$-partial differential equations, similar to Section \ref{Sec2}, we naturally introduce homogeneous little $q$-Jacobi polynomials
\begin{align}\label{poly2}
p_n^{(\alpha,\beta)}(x,y)=\sk  q^{ k(k+1-2n)/2 }\begin{bmatrix} n \\k \end{bmatrix}
\frac{ (\alpha\beta q^{n+1};q)_k }{ (\alpha q;q)_k }(-x)^k y^{n-k}.
\end{align}
Evidently,
\begin{align}\label{poly3}
p_n^{(\alpha,\beta)}(x,y)=y^n \mathcal{P}_n^{(\alpha,\beta)}(x/y),\,\,
p_n^{(\alpha,\beta)}(x,1)=\mathcal{P}_n^{(\alpha,\beta)}(x), \,\,
p_n^{(\alpha,\beta)}(0,y)=y^n.
\end{align}
Firstly, Proposition \ref{proposition2} shows an important property of homogeneous little $q$-Jacobi polynomials.
\begin{proposition}\label{proposition2}
The homogeneous little $q$-Jacobi polynomials $p_n^{(\alpha,\beta)}(x,y)$ satisfy the $q$-partial differential equation
\begin{align}\label{equation1}
\D_{q,x}  (1-\alpha \eta_x) \left\{ p_n^{(\alpha,\beta)}(x,y) \right\}
=-q \D_{q,y}   (\eta^{-1}_y-q \alpha \beta \eta^2_x )
\left\{ p_n^{(\alpha,\beta)}(x,y) \right\},
\end{align}
namely,
\begin{align*}
 \D_{q,x}  \left\{ p_n^{(\alpha,\beta)}(x,y)
-\alpha p_n^{(\alpha,\beta)}(qx,y) \right\}
= -q \D_{q,y}   \left\{  p_n^{(\alpha,\beta)}(x,y/q)
-q \alpha \beta p_n^{(\alpha,\beta)}(q^2x,y)  \right\}.
\end{align*}

\end{proposition}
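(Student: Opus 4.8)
The plan is to mirror the term-by-term verification used for Proposition \ref{proposition1}: expand both sides of (\ref{equation1}) as explicit power series in $x$ and $y$, reduce each to a single sum over the monomials $x^j y^{n-1-j}$, and then check that the coefficients agree. I would let $\mathrm{LHS}$ and $\mathrm{RHS}$ denote the two sides of (\ref{equation1}) and compute them separately.

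For $\mathrm{LHS}$, applying the operator $1-\alpha\eta_x$ to the generic term $x^k y^{n-k}$ multiplies it by $(1-\alpha q^k)$; since $(\alpha q;q)_k=(1-\alpha q^k)(\alpha q;q)_{k-1}$, this collapses the denominator to $(\alpha q;q)_{k-1}$. Then applying $\D_x$ through $\D_x\{x^k\}=(1-q^k)x^{k-1}$ annihilates the $k=0$ term and introduces a factor $(1-q^k)$, at which point I would invoke the $q$-binomial identity (\ref{eq4}) to rewrite $\begin{bmatrix} n \\ k \end{bmatrix}(1-q^k)=\begin{bmatrix} n \\ k-1 \end{bmatrix}(1-q^{n-k+1})$, leaving a sum over $k=1,\dots,n$ in the monomials $x^{k-1}y^{n-k}$. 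For $\mathrm{RHS}$, the operator $\eta_y^{-1}-q\alpha\beta\eta_x^2$ acts on $x^k y^{n-k}$ as multiplication by $q^{k-n}-\alpha\beta q^{2k+1}$, and the subsequent $-q\D_y$, via $\D_y\{y^{n-k}\}=(1-q^{n-k})y^{n-k-1}$, annihilates the $k=n$ term and produces a sum over $k=0,\dots,n-1$ in the monomials $x^k y^{n-k-1}$.

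To finish, I would align the two expansions by setting $k-1=j$ on the left and $k=j$ on the right, so that both run over $x^j y^{n-1-j}$ for $j=0,\dots,n-1$. The common factor $(-1)^j\begin{bmatrix} n\\ j\end{bmatrix}(1-q^{n-j})/(\alpha q;q)_j$ then cancels, and using $(\alpha\beta q^{n+1};q)_{j+1}=(1-\alpha\beta q^{n+1+j})(\alpha\beta q^{n+1};q)_j$ the equality of coefficients reduces, after extracting the common power $q^{j(j+1-2n)/2}$ from both sides, to the elementary identity $q^{j+1-n}(1-\alpha\beta q^{n+1+j})=q(q^{j-n}-\alpha\beta q^{2j+1})$, each side equalling $q^{j+1-n}-\alpha\beta q^{2j+2}$.

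The main obstacle will be bookkeeping the prefactor $q^{k(k+1-2n)/2}$ through the index shift: its exponents on the two sides differ by $j+1-n$, and the whole argument hinges on checking that after the shift the surviving $q$-powers combine with the $\alpha\beta$-dependent factors to yield exactly the elementary equality above. Everything else is the routine operator algebra already rehearsed in the proof of Proposition \ref{proposition1}.
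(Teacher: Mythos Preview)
Your proposal is correct and follows essentially the same approach as the paper: both are direct term-by-term series verifications, applying the operators to the monomials $x^k y^{n-k}$ and comparing coefficients via the identity (\ref{eq4}). The only cosmetic difference is that the paper first absorbs the action of $-q(\eta_y^{-1}-q\alpha\beta\eta_x^2)$ into a clean shift of the $q$-power and the Pochhammer symbol (writing the result with $(\alpha\beta q^{n+1};q)_{k+1}$ and exponent $(k+1)(k+2-2n)/2$) before applying $\D_y$ and reindexing, whereas you carry the raw factor $q^{k-n}-\alpha\beta q^{2k+1}$ to the end and check the elementary identity there; these are the same computation in a different order.
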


\begin{proof}
If we use LHS to denote the left-hand side of the equation (\ref{equation1}), we have
\begin{align*}
{\rm LHS} &=\D_{q,x}  \left\{ \sk (-1)^k  q^{ k(k+1-2n)/2 }
\begin{bmatrix} n \\k \end{bmatrix}
\frac{ (\alpha\beta q^{n+1};q)_k }{ (\alpha q;q)_{k-1} }x^k y^{n-k}\right\}\\
&=\skk (-1)^k  q^{ k(k+1-2n)/2 }
\begin{bmatrix} n \\k \end{bmatrix} (1-q^k)
\frac{ (\alpha\beta q^{n+1};q)_k }{ (\alpha q;q)_{k-1} }x^{k-1} y^{n-k}.
\end{align*}
Similarly, use RHS to denote the right-hand side of the equation (\ref{equation1}). By simple calculation, we obatin
\begin{align*}
{\rm RHS} &=\D_{q,y}  \left\{ \sk (-1)^{k+1}  q^{ (k+1)(k+2-2n)/2 }
\begin{bmatrix} n \\k \end{bmatrix}
\frac{ (\alpha\beta q^{n+1};q)_{k+1} }
{ (\alpha q;q)_k }x^k y^{n-k} \right\} \\
&=\sum_{k=0}^{n-1} (-1)^{k+1}  q^{ (k+1)(k+2-2n)/2 }
\begin{bmatrix} n \\k \end{bmatrix} (1-q^{n-k})
\frac{ (\alpha\beta q^{n+1};q)_{k+1} }
{ (\alpha q;q)_k }x^k y^{n-k-1} \\
&=\skk (-1)^k  q^{ k(k+1-2n)/2 }
\begin{bmatrix} n \\k-1 \end{bmatrix} (1-q^{n-k+1})
\frac{ (\alpha\beta q^{n+1};q)_k }
{ (\alpha q;q)_{k-1} }x^{k-1} y^{n-k}.
\end{align*}
It follows from (\ref{eq4}) that ${\rm LHS=RHS}$.
\end{proof}
The main result of this section is the expansion theorem presented below.
\begin{theorem}\label{th2}
If $f(x_1, y_1,\cdots, x_k, y_k)$ is a $2k$-variable analytic function at $(0, 0, \cdots, 0)\in \mathbb{C}^{2k}$, then, $f$ can be expanded
\begin{align*}
\sum_{n_1,\cdots,n_k=0}^{\infty} \la_{n_1,\cdots,n_k} \,
p_{n_1}^{(\alpha_1,\beta_1)}(x_1,y_1) \cdots p_{n_k}^{(\alpha_k,\beta_k)}(x_k,y_k),
\end{align*}
where $\la_{n_1,\cdots,n_k}$ are independent of $x_1,y_1,\cdots,x_k,y_k$,
if and only if $f$ satisfies the $q$-partial differential equations
\begin{align}\label{eq13}
\D_{q,x_j} (1-\alpha_j \eta_{x_j}) \left\{ f \right\}
=-q \D_{q,y_j} (\eta^{-1}_{y_j}-q \alpha_j \beta_j \eta^2_{x_j})
\left\{ f \right\}
\end{align}
for $j\in \{1,2,\ldots,k\}$.
\end{theorem}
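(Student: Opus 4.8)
The plan is to follow the induction-on-$k$ scheme of the proof of Theorem~\ref{thLn} essentially verbatim, with Proposition~\ref{proposition2} supplying the (easy) converse direction and all of the genuinely new work concentrated in the base case $k=1$. The inductive passage from $k-1$ to $k$ carries over word for word: regarding $f$ as a function of $(x_1,y_1)$, the case $k=1$ gives $f=\sum_{n_1\ge0}c_{n_1}(x_2,y_2,\ldots,x_k,y_k)\,p_{n_1}^{(\alpha_1,\beta_1)}(x_1,y_1)$; setting $x_1=0$ and using $p_{n_1}^{(\alpha_1,\beta_1)}(0,y_1)=y_1^{n_1}$ identifies the $c_{n_1}$ as Maclaurin coefficients in $y_1$, hence analytic in the remaining $2k-2$ variables; substituting back into (\ref{eq13}) for $j=2,\ldots,k$ and equating coefficients of $p_{n_1}^{(\alpha_1,\beta_1)}(x_1,y_1)$ shows each $c_{n_1}$ solves the same system in the remaining variables, whereupon the inductive hypothesis finishes the expansion.

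For the base case, Proposition~\ref{pro} lets me write, near $(0,0)$,
\begin{align*}
f(x_1,y_1)=\sum_{m=0}^{\infty}x_1^m\,g_m(y_1),\qquad
g_m(y_1):=\s\la_{m,n}y_1^n,
\end{align*}
as an absolutely and uniformly convergent series. I would substitute this into (\ref{eq13}) for $j=1$. Using $\D_{x_1}\{x_1^m\}=(1-q^m)x_1^{m-1}$ and $\eta_{x_1}\{x_1^m\}=q^m x_1^m$, the left-hand side collapses to $\sum_{m\ge1}(1-\alpha_1 q^m)(1-q^m)x_1^{m-1}g_m(y_1)$; on the right-hand side $\eta_{y_1}^{-1}$ replaces $g_{m-1}(y_1)$ by $g_{m-1}(q^{-1}y_1)$ and $\eta_{x_1}^2$ multiplies $x_1^{m-1}$ by $q^{2(m-1)}$. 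Equating coefficients of $x_1^{m-1}$ then yields the recurrence
\begin{align*}
(1-\alpha_1 q^m)(1-q^m)\,g_m(y_1)
=-q\,\D_{y_1}\bigl\{g_{m-1}(q^{-1}y_1)\bigr\}
+q^{2m}\alpha_1\beta_1\,\D_{y_1}\bigl\{g_{m-1}(y_1)\bigr\}.
\end{align*}

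The decisive new feature, absent in the Laguerre setting, is the shift $\eta_{y_1}^{-1}$: it sends the coefficient of $y_1^{n}$ in $g_{m-1}$ through $\D_{y_1}$ with an extra factor $q^{-n}$, so at the level of coefficients the recurrence reads
\begin{align*}
(1-\alpha_1 q^m)(1-q^m)\,\la_{m,n-1}
=-q^{1-n}(1-q^n)\bigl(1-\alpha_1\beta_1 q^{\,2m+n-1}\bigr)\,\la_{m-1,n}.
\end{align*}
Here I would emphasise the algebraic identity $-q^{1-n}+q^{2m}\alpha_1\beta_1=-q^{1-n}\bigl(1-\alpha_1\beta_1 q^{\,2m+n-1}\bigr)$, which fuses the two right-hand contributions into a single Pochhammer factor. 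Since the total degree $N:=m+n-1$ is preserved along the recurrence, iterating from $\la_{0,N}$ down to $\la_{m,N-m}$ accumulates the sign $(-1)^m$, the power $q^{m(m+1-2N)/2}$ produced by the $q^{1-n}$ factors, the Pochhammer ratio $(\alpha_1\beta_1 q^{N+1};q)_m/(\alpha_1 q;q)_m$, and --- from the $(1-q^n)$ and $(1-q^m)$ factors --- the $q$-binomial coefficient $\begin{bmatrix}N\\m\end{bmatrix}$. Recognising this as $\la_{0,N}$ times the coefficient of $x_1^m y_1^{N-m}$ in $p_N^{(\alpha_1,\beta_1)}(x_1,y_1)$ and resumming over $N$ gives $f(x_1,y_1)=\s\la_{0,n}\,p_n^{(\alpha_1,\beta_1)}(x_1,y_1)$, settling $k=1$.

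The converse in all cases is immediate from Proposition~\ref{proposition2}, since the operators in (\ref{eq13}) indexed by $j$ act only on $(x_j,y_j)$. Accordingly, I expect the only real obstacle to lie in the base-case bookkeeping: verifying that the two-term recurrence, driven by the $\eta_{y_1}^{-1}$ coupling, iterates to exactly the $N$-dependent $q$-power $q^{m(m+1-2N)/2}$ and the Pochhammer pair $(\alpha_1\beta_1 q^{N+1};q)_m/(\alpha_1 q;q)_m$ characteristic of the little $q$-Jacobi coefficients --- a subtlety that does not arise in Theorem~\ref{thLn}, where the corresponding $q$-exponent depends on $m$ alone.
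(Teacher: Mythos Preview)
Your proposal is correct and follows essentially the same approach as the paper's own proof: induction on $k$, with the base case handled by substituting the power-series expansion into the $q$-PDE, reading off the coefficient recurrence, and iterating it to recognise the little $q$-Jacobi coefficients; the inductive step and the converse via Proposition~\ref{proposition2} are identical. Your presentation is in fact slightly cleaner than the paper's --- by noting at once that the total degree $N=m+n-1$ is conserved along the recurrence you compress what the paper spells out as a sequence of displayed relations (\ref{eq12})--(\ref{eq10}) into a single iteration, and your explicit fusing of $-q^{1-n}+q^{2m}\alpha_1\beta_1$ into $-q^{1-n}(1-\alpha_1\beta_1 q^{2m+n-1})$ is exactly the step the paper performs implicitly.
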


\begin{proof}
We use mathematical induction. When $k=1$, it follows from Proposition \ref{pro} that $f$ can be expanded in an absolutely and uniformly convergent power series in a neighborhood of $(0,0)$. Therefore, there exists a sequence $\{\la_{m,n}\}$ independent of $x_1$ and $y_1$ for which
\begin{align}\label{f(x1,y1)}
f(x_1,y_1)=\sum_{m,n=0}^{\infty} \la_{m,n} x_1^m y_1^n=\sum_{m=0}^{\infty} x_1^m  \s \la_{m,n} y_1^n.
\end{align}
Substituting the above equation into following $q$-partial differential equation
\begin{align}\label{eq5}
\D_{q,x_1} (1-\alpha_1 \eta_{x_1} ) \left\{ f(x_1,y_1) \right\}
=-q \D_{q,y_1} (\eta^{-1}_{y_1}-q \alpha_1 \beta_1 \eta^2_{x_1})
\left\{ f(x_1,y_1) \right\}.
\end{align}
The left-hand side of (\ref{eq5}) can be written as
\begin{align*}
 \sum_{m=1}^{\infty} (1-\alpha_1 q^m)(1-q^m) x_1^{m-1}  \s \la_{m,n} y_1^n
=\sum_{m=0}^{\infty} (1-\alpha_1 q^{m+1})(1-q^{m+1})
x_1^m  \s \la_{m+1,n} y_1^n,
\end{align*}
and right-hand side of (\ref{eq5}) can be expressed as
\begin{align*}
&\quad \D_{q,y_1} \left\{ \sum_{m=0}^{\infty} \s
(-q)(q^{-n}-\alpha_1 \beta_1 q^{2m+1}) \la_{m,n} x_1^m  y_1^n \right\} \\
&= \sum_{m=0}^{\infty} \s (-q) (1-q^n) (q^{-n}-\alpha_1 \beta_1 q^{2m+1})
 \la_{m,n} x_1^m  y_1^{n-1}.
\end{align*}
Therefore, we obtain
\begin{align}\label{eq7}
&\quad \sum_{m=0}^{\infty} (1-\alpha_1 q^{m+1})(1-q^{m+1})
x_1^m  \s \la_{m+1,n} y_1^n \nonumber \\
& = \sum_{m=0}^{\infty} \s (-q) (1-q^n) (q^{-n}-\alpha_1 \beta_1 q^{2m+1})
 \la_{m,n} x_1^m  y_1^{n-1}.
\end{align}
Equating the coefficients of $x_1^m$ in (\ref{eq7}), we easily see that
\begin{align*}
&(1-q^m)(1-\alpha_1 q^m) \s \la_{m,n} y_1^n \\
&=-q \s (1-q^{n+1}) (q^{-(n+1)}-\alpha_1 \beta_1 q^{2(m-1)+1})
\la_{m-1,n+1}  y_1^n.
\end{align*}
It follows from recurrence relation of the above equation, we obtain
\begin{align}
& ( 1-q^{m-1} )(1-\alpha_1 q^{m-1}) \s \la_{m-1,n} y_1^n \nonumber  \\
& =-q \s (1-q^{n+1}) (q^{-(n+1)}-\alpha_1 \beta_1 q^{2(m-2)+1})
  \la_{m-2,n+1}  y_1^n, \label{eq12} \\
&( 1-q^{m-2} )(1-\alpha_1 q^{m-2}) \s \la_{m-2,n} y_1^n  \nonumber  \\
&=-q \s (1-q^{n+1}) (q^{-(n+1)}-\alpha_1 \beta_1 q^{2(m-3)+1}) \la_{m-3,n+1}  y_1^n, \\
& \qquad \qquad \qquad \qquad \vdots  \nonumber \\
& ( 1-q^2 )(1-\alpha_1 q^2) \s \la_{2,n} y_1^n  \nonumber  \\
& =-q \s (1-q^{n+1}) (q^{-(n+1)}-\alpha_1 \beta_1 q^{2\cdot 1+1})
\la_{1,n+1}  y_1^n,  \label{eq11} \\
& ( 1-q )(1-\alpha_1 q) \s \la_{1,n} y_1^n \nonumber  \\
&=-q \s (1-q^{n+1}) (q^{-(n+1)}-\alpha_1 \beta_1 q^{2\cdot 0+1}) \la_{0,n+1}  y_1^n. \label{eq10}
\end{align}
Now, equating the coefficients of $y_1^n$ on both sides of (\ref{eq12})-(\ref{eq10}), we easily deduce that
\begin{align*}
\la_{m,n}&=\frac{-q(1-q^{n+1})(q^{-(n+1)}-\alpha_1 \beta_1 q^{2(m-1)+1})}
{(1-q^m)(1-\alpha_1 q^m)}\la_{m-1,n+1},  \\
\la_{m-1,n}&=\frac{-q(1-q^{n+1})(q^{-(n+1)}-\alpha_1 \beta_1 q^{2(m-2)+1})}
{(1-q^{m-1}) (1-\alpha_1 q^{m-1})}\la_{m-2,n+1}, \\
&\qquad \qquad \qquad \qquad  \vdots \\
\la_{2,n}&=\frac{-q(1-q^{n+1})(q^{-(n+1)}-\alpha_1 \beta_1 q^{2\cdot1+1})}
{(1-q^2) (1-\alpha_1 q^2)}\la_{1,n+1}, \\
\la_{1,n}&=\frac{-q(1-q^{n+1})(q^{-(n+1)}-\alpha_1 \beta_1 q^{2\cdot0+1})}
{(1-q) (1-\alpha_1 q)}\la_{0,n+1}.
\end{align*}
Next, iterating the above equations $m-1$ times, we conclude that
\begin{align*}
\la_{m,n}&=\frac{-q(1-q^{n+1})(q^{-(n+1)}-\alpha_1 \beta_1 q^{2(m-1)+1})}
{(1-q^m)(1-\alpha_1 q^m)} \\
& \quad \times \frac{-q(1-q^{n+2})(q^{-(n+2)}-\alpha_1 \beta_1 q^{2(m-2)+1})}
{(1-q^{m-1}) (1-\alpha_1 q^{m-1})} \\
&\qquad\qquad\qquad\qquad\qquad  \vdots  \\
& \quad \times \frac{-q(1-q^{n+m-1})(q^{-(n+m-1)}
-\alpha_1 \beta_1 q^{2\cdot1+1})}{(1-q^2) (1-\alpha_1 q^2)} \\
& \quad \times \frac{-q(1-q^{n+m})(q^{-(n+m)}
-\alpha_1 \beta_1 q^{2\cdot0+1})} {(1-q) (1-\alpha_1 q)}\la_{0,n+m} \\
&=\frac{(-q)^m (q^{n+1};q)_m}{(q;q)_m (\alpha_1 q;q)_m}
(q^{-(n+1)}-\alpha_1 \beta_1 q^{2(m-1)+1})\cdots
(q^{-(n+m)}-\alpha_1 \beta_1 q^{2\cdot0+1})\la_{0,n+m} \\
&=q^{m(1-2n-m)/2} \frac{\la_{0,n+m} (-1)^m (q;q)_{m+n}}
{(q;q)_m (q;q)_n (\alpha_1 q;q)_m}
(1-\alpha_1 \beta_1 q^{m+n+1})\cdots(1-\alpha_1 \beta_1 q^{2m+n}) \\
&=\la_{0,n+m} (-1)^m q^{m(1-2n-m)/2} \begin{bmatrix} n+m \\m \end{bmatrix}
\frac{(\alpha_1 \beta_1 q^{m+n+1};q)_m}{(\alpha_1 q;q)_m}.
\end{align*}
It follows from the above equation that
\begin{align*}
 \s \la_{m,n} y_1^n &= \s (-1)^m \la_{0,n+m} q^{m(1-2n-m)/2}
 \begin{bmatrix} n+m \\m \end{bmatrix} \frac{(\alpha_1 \beta_1 q^{m+n+1};q)_m} {(\alpha_1 q;q)_m}  y_1^n \\
&=\sum_{n=m}^{\infty} (-1)^m \la_{0,n}  q^{m(1-2n+m)/2}
 \begin{bmatrix} n \\m \end{bmatrix} \frac{(\alpha_1 \beta_1 q^{n+1};q)_m} {(\alpha_1 q;q)_m} y_1^{n-m}.
\end{align*}
Noting that the series in (\ref{f(x1,y1)}) is an absolutely and uniformly convergent series, substituting the above equation into (\ref{f(x1,y1)}) and interchanging the order of the summation, we obtain
\begin{align*}
f(x_1,y_1)&=\sum_{m=0}^{\infty}  x_1^m
 \sum_{n=m}^{\infty} \la_{0,n}  (-1)^m q^{m(1-2n+m)/2}
 \begin{bmatrix} n \\m \end{bmatrix} \frac{(\alpha_1 \beta_1 q^{n+1};q)_m} {(\alpha_1 q;q)_m}  y_1^{n-m} \\
&=\s \la_{0,n} \sum_{m=0}^n  (-1)^m q^{m(1-2n+m)/2}
 \begin{bmatrix} n \\m \end{bmatrix} \frac{(\alpha_1 \beta_1 q^{n+1};q)_m} {(\alpha_1 q;q)_m}   x_1^m y_1^{n-m} \\
&=\s \la_{0,n}\, p_n^{(\alpha_1,\beta_1)}(x_1,y_1).
\end{align*}
The above calculation shows that the sufficiency of Theorem \ref{th2} is correct. Conversely, if $f(x_1,y_1)$ can be expanded in terms of $p_n^{(\alpha_1,\beta_1)}(x_1,y_1)$, then using Proposition \ref{proposition2}, we find that $f(x_1,y_1)$ satisfies (\ref{eq5}). So we can prove the case of $k=1$.

Next, we assume that Theorem \ref{th2} is true for the case $k-1$. Since $f$ is analytic at $(0,0)$. Thus, there exists a sequence $\{c_{n_1}(x_2,y_2,\ldots,x_k,y_k)\}$ independent of $x_1$ and $y_1$ such that
\begin{align}\label{f(x_1,x_k)}
f(x_1,y_1,\ldots,x_k,y_k)=\sum_{n_1=0}^{\infty}
c_{n_1}(x_2,y_2,\ldots,x_k,y_k) p_{n_1}^{(\alpha_1,\beta_1)}(x_1,y_1).
\end{align}
Putting $x_1=0$ in (\ref{f(x_1,x_k)})  and using $p_{n_1}^{(\alpha_1,\beta_1)}(0,y_1)=y_1^{n_1}$, we obtain
\begin{align*}
f(0,y_1,x_2,y_2, \ldots, x_k,y_k)=\sum_{n_1=0}^{\infty}
c_{n_1}(x_2,y_2,\ldots,x_k,y_k) y_1^{n_1}.
\end{align*}
Using the Maclaurin expansion theorem, we have
\begin{align*}
c_{n_1}(x_2,y_2,\ldots,x_k,y_k)
=\frac{\partial^{n_1} f(0,y_1,x_2,y_2,\ldots,x_k,y_k)}
{ n_1! \partial {y_1}^{n_1}} \Big|_{y_1=0}.
\end{align*}
Since $f(x_1,y_1,\ldots,x_k,y_k)$ is analytic near $(x_1,y_1,\ldots,x_k,y_k)=(0,\ldots,0)\in \mathbb{C}^{2k}$, it follows from the above equation that $c_{n_1}(x_2, y_2, \ldots, x_k, y_k)$ is analytic near $(x_2,y_2,\ldots,x_k,y_k)=(0,\ldots,0)\in \mathbb{C}^{2k-2}$. Substituting (\ref{f(x_1,x_k)}) into (\ref{eq13}), we find that for $j=2,\ldots,k$,
\begin{align*}
 & \sum_{n_1=0}^{\infty} \D_{q,x_j} (1-\alpha \eta_{x_j}) \left\{
c_{n_1}(x_2,y_2,\ldots,x_k,y_k) \right\} p_{n_1}^{(\alpha_1,\beta_1)}(x_1,y_1) \\
&=\sum_{n_1=0}^{\infty} \left[-q\D_{q,y_j} (\eta^{-1}_{y_j}-q \alpha \beta \eta^2_{x_j})\right] \left\{ c_{n_1}(x_2,y_2,\ldots,x_k,y_k) \right\}
p_{n_1}^{(\alpha_1,\beta_1)}(x_1,y_1).
\end{align*}
By equating the coefficients of $p_{n_1}^{(\alpha_1,\beta_1)}(x_1,y_1)$ in the above equation, we obtain
\begin{align*}
 & \D_{q,x_j} (1-\alpha \eta_{x_j})
 \left\{ c_{n_1}(x_2,y_2,\ldots,x_k,y_k) \right\} \\
&=-q\D_{q,y_j} (\eta^{-1}_{y_j}-q \alpha \beta \eta^2_{x_j})
\left\{ c_{n_1}(x_2,y_2,\ldots,x_k,y_k) \right\}.
\end{align*}
Therefore, by the inductive hypothesis, there exists a sequence $\la_{n_1,n_2,\ldots,n_k}$ independent of $x_2$, $y_2$, $\ldots$, $x_k$, $y_k$ (of course independent of $x_1$ and $y_1$) for which
\begin{align*}
c_{n_1}(x_2,y_2,\ldots,x_k,y_k)
=\sum_{n_2,\ldots,n_k=0}^{\infty} \la_{n_1,n_2,\ldots,n_k}
p_{n_2}^{(\alpha_2,\beta_2)}(x_2,y_2) \cdots p_{n_k}^{(\alpha_k,\beta_k)}(x_k,y_k).
\end{align*}
Substituting this equation into (\ref{f(x_1,x_k)}), we proved the sufficiency of the theorem. Conversely, if $f$ can be expanded in terms of $p_{n_1}^{(\alpha_1,\beta_1)}(x_1,y_1) \cdots p_{n_k}^{(\alpha_k,\beta_k)}(x_k,y_k)$, it follows from (\ref{equation1}) that $f$ satisfies (\ref{eq13}). This completes the proof of Theorem \ref{th2}.
\end{proof}

\begin{remark}
Theorem \ref{th2} shows that an analytic function can be expanded in terms of homogeneous little $q$-Jacobi polynomials, and its essence is to satisfy  $q$-partial differential equation (\ref{eq13}). Theorem \ref{th2} implies that all solutions to $q$-partial differential equation (\ref{eq13}) can be represented as linear combinations of homogeneous little $q$-Jacobi polynomials. See Section \ref{Sec6} for the application of this theorem.
\end{remark}
At the end of this section, we will present the generating function of homogeneous little $q$-Jacobi polynomials.
\begin{theorem}\label{5Th1}
Generating function for homogeneous little $q$-Jacobi polynomials:
\begin{align*}
\s \frac{q^{n(n-1)/2} t^n}{(q;q)_n (\beta q;q)_n}
p_n^{(\alpha,\beta)}(x,y)
={}_0\phi_1\left(
\begin{gathered} - \\ \alpha q \end{gathered} ;\,q, -\alpha qxt \right)
{}_2\phi_1\left(
\begin{gathered} y/x, - \\ \beta q \end{gathered} ;\,q, -xt \right).
\end{align*}
\end{theorem}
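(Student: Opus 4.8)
The plan is to apply Theorem~\ref{th2} with $k=1$, exactly along the lines of the proof of Theorem~\ref{generating1}. Write $f(x,y)$ for the right-hand side. The first point to settle is that, despite the parameter $y/x$, the function $f$ is analytic at the origin: since
\[
(y/x;q)_n(-xt)^n=(-t)^n\prod_{j=0}^{n-1}(x-yq^j),
\]
the factor ${}_2\phi_1(y/x,-;\beta q;q,-xt)$ is a genuine power series in $x$ and $y$ with no negative powers of $x$. Thus, writing $f=A(x)\,B(x,y)$ with $A(x)={}_0\phi_1(-;\alpha q;q,-\alpha qxt)$ and
\[
B(x,y)=\s\frac{(-t)^n}{(\beta q;q)_n(q;q)_n}\prod_{j=0}^{n-1}(x-yq^j),
\]
both factors are explicit series, convergent in a neighborhood of $(0,0)\in\mathbb{C}^2$, to which $\D_x$, $\D_y$ and the shifts $\eta_x$, $\eta_y^{-1}$ may be applied termwise.

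The core of the argument is to verify that $f$ satisfies the $q$-partial differential equation \eqref{eq13} with $k=1$,
\[
\D_x(1-\alpha\eta_x)\{f\}=-q\,\D_y(\eta_y^{-1}-q\alpha\beta\eta_x^2)\{f\}.
\]
I would compute the two sides separately and termwise. For the left-hand side I apply the product rule \eqref{Dfg} to $A(x)B(x,y)$ together with the elementary formula $\D_x\{\prod_{j=0}^{n-1}(x-yq^j)\}=(1-q^n)\prod_{j=0}^{n-2}(x-yq^j)$, just as the products $(y+x)(y+xq)\cdots$ were handled in the proof of Theorem~\ref{generating1}(3). For the right-hand side, $\eta_y^{-1}$ sends $y$ to $y/q$ in the $B$-factor, $\eta_x^2$ rescales $x$ in both factors, and $\D_y$ is then evaluated termwise using the analogous $q$-derivative formula in $y$. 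The goal is to match the two resulting series coefficient by coefficient.

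I expect this matching to be the main obstacle. Because $f$ is a product of two series both depending on $x$, the operator $\D_x(1-\alpha\eta_x)$ couples the ${}_0\phi_1$ and ${}_2\phi_1$ factors through the product rule, while on the other side the shift $\eta_x^2$ acts on $A$ and $B$ with different arguments ($A(q^2x)$ versus $A(x)$); reconciling these requires careful reindexing and cancellation of the bracketed products, in the same spirit as the final polynomial identity at the end of the proof of Theorem~\ref{generating1}(3). Once \eqref{eq13} is confirmed, Theorem~\ref{th2} furnishes a sequence $\{\la_n\}$, independent of $x$ and $y$, with $f(x,y)=\s\la_n\,p_n^{(\alpha,\beta)}(x,y)$.

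Finally I would determine $\la_n$ by setting $x=0$, which is legitimate by the analyticity noted above. There ${}_0\phi_1(-;\alpha q;q,0)=1$, while $\prod_{j=0}^{n-1}(-yq^j)=(-y)^nq^{n(n-1)/2}$ collapses the $B$-factor, giving
\[
f(0,y)=\s\frac{q^{n(n-1)/2}t^n}{(q;q)_n(\beta q;q)_n}\,y^n.
\]
Comparing this with $\s\la_n\,p_n^{(\alpha,\beta)}(0,y)=\s\la_n y^n$, where $p_n^{(\alpha,\beta)}(0,y)=y^n$, and equating coefficients of $y^n$ yields $\la_n=q^{n(n-1)/2}t^n/[(q;q)_n(\beta q;q)_n]$, which is precisely the coefficient on the left-hand side of the asserted identity. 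This would complete the proof.
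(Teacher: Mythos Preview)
Your plan is coherent and would in principle work, but it is \emph{not} the route the paper takes. The paper does not invoke Theorem~\ref{th2} at all for this generating function. Instead it lifts a known single-variable identity of Ismail and Wilson \cite[(2.7)]{IW1982},
\[
\frac{q^{n(n-1)/2}}{(q;q)_n(\beta q;q)_n}\,p_n^{(\alpha,\beta)}(x)
=\sum_{k=0}^n\frac{(-x)^n(1/x;q)_{n-k}\,q^{k(k-1)}(\alpha q)^k}
{(\beta q;q)_{n-k}(q;q)_{n-k}(\alpha q;q)_k(q;q)_k},
\]
passes to the bivariate polynomial by the substitution $x\mapsto x/y$ and multiplication by $y^n$, then multiplies by $t^n$, sums over $n$, and interchanges the order of summation. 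The double sum factors cleanly into the product ${}_0\phi_1\cdot{}_2\phi_1$. This is a short, purely combinatorial manipulation with no differential operators at all.

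Your approach is philosophically closer to the rest of the paper---you are applying the paper's own expansion machinery to a theorem the author chose to prove differently---and the analyticity check and the extraction of $\la_n$ at $x=0$ are both correct. The risk lies where you already flag it: the verification of \eqref{eq13} for the product $A(x)B(x,y)$. Because $\eta_x^2$ hits both factors while $\eta_y^{-1}$ hits only $B$, and the product rule \eqref{Dfg} must be applied under $\D_x(1-\alpha\eta_x)$, the bookkeeping is substantially heavier than in Theorem~\ref{generating1}(3); you have not actually carried it out, only asserted that it should succeed. It will, since the identity is true, but the paper's direct argument via \cite{IW1982} reaches the conclusion in a few lines and sidesteps this computation entirely.
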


\begin{proof}
It follows from \cite{KS1998} that
\begin{align*}
\s \frac{(-1)^n q^{n(n-1)/2} t^n}{(q;q)_n (\beta q;q)_n}
\mathcal{P}_n^{(\alpha,\beta)}(x)
={}_0\phi_1\left( \begin{gathered} - \\ \alpha q \end{gathered} ;\,q, \alpha qxt \right) {}_2\phi_1\left(
\begin{gathered} 1/x, - \\ \beta q \end{gathered} ;\,q, xt \right).
\end{align*}
If $x$ is replaced by $x/y$ in the above equation, we have
\begin{align*}
\s \frac{(-1)^n q^{n(n-1)/2} t^n}{(q;q)_n (\beta q;q)_n}
\mathcal{P}_n^{(\alpha,\beta)}(x/y)
={}_0\phi_1\left( \begin{gathered} - \\ \alpha q \end{gathered} ;\,q, \alpha qxt/y \right) {}_2\phi_1\left(
\begin{gathered} y/x, - \\ \beta q \end{gathered} ;\,q, xt/y \right).
\end{align*}
Letting further $t\to -ty$ in the above equation gives
\begin{align*}
\s \frac{ q^{n(n-1)/2} t^n}{(q;q)_n (\beta q;q)_n}
y^n \mathcal{P}_n^{(\alpha,\beta)}(x/y)
={}_0\phi_1\left( \begin{gathered} - \\ \alpha q \end{gathered} ;
\,q, -\alpha qxt \right) {}_2\phi_1\left(
\begin{gathered} y/x, - \\ \beta q \end{gathered} ;\,q, -xt \right).
\end{align*}
Finally, we can deduce the conclusion by combining the above equation with (\ref{poly3}).
\end{proof}
By using Proposition \ref{proposition2}, we can determine that the right-hand side of the equation in Theorem \ref{5Th1} satisfies the $q$-partial differential equation (\ref{equation1}). Therefore, we have the following Corollary \ref{5corollary}, which will be applied in Section \ref{Sec6}.
\begin{corollary}\label{5corollary}
We have
\begin{align*}
& \D_{q,x}  (1-\alpha \eta_x) \left\{ {}_0\phi_1\left(
\begin{gathered} - \\ \alpha q \end{gathered} ;\,q, -\alpha qxt \right)
{}_2\phi_1\left(
\begin{gathered} y/x, - \\ \beta q \end{gathered} ;\,q, -xt \right) \right\}
\nonumber \\
&=-q \D_{q,y}   (\eta^{-1}_y-q \alpha \beta \eta^2_x )
\left\{ {}_0\phi_1\left(
\begin{gathered} - \\ \alpha q \end{gathered} ;\,q, -\alpha qxt \right)
{}_2\phi_1\left(
\begin{gathered} y/x, - \\ \beta q \end{gathered} ;\,q, -xt \right) \right\}.
\end{align*}
\end{corollary}

\section{Applications of Theorems \ref{thLn} and \ref{th2}}\label{Sec6}

The Rogers-Szeg\"o polynomials are famous $q$-polynomials which play an essential role in the theory of orthogonal polynomials. Liu \cite{L2013} studied the homogeneous Rogers-Szeg\"o polynomials from the perspective of $q$-partial differential equations, which are defined as
\begin{align}\label{q-Hahn}
h_n(x,y)=\sum_{k=0}^n \begin{bmatrix} n \\k \end{bmatrix} x^k y^{n-k}.
\end{align}
Further, the homogeneous Hahn polynomials
\begin{align}\label{Psi}
\Psi_n^{(a)}(x,y)=\sum_{k=0}^n \begin{bmatrix} n \\k \end{bmatrix} (a;q)_k x^k y^{n-k}
\end{align}
are a generalization of homogeneous Rogers-Szeg\"o polynomials. They were first studied by Hahn \cite{Hahn}, and then by Al-Salam and Carlitz \cite{Al1965}. So they are also called Al-Salam-Carlitz polynomials. The following generating functions will be frequently used
(cf. \cite{Al1965,L2015})
\begin{align}\label{6gen1}
\s \frac{\Psi_n^{(a)}(x,y) }{(q;q)_n}t^n=\frac{(axt;q)_{\infty}}{(xt,yt;q)_{\infty}}, \,\, \max \{|xt|,|yt|\}<1.
\end{align}
We present one of the two famous Ramanujan $q$-beta integrals \cite{A1982}.
\begin{proposition}\label{6pro3}
For $m\in \mathbb{R}$, $0<q=e^{-2k^2}<1$ and $|yzq|<1$, we have
\begin{align}\label{6eq15}
&\int_{-\infty}^{+\infty} \frac{e^{-\theta^2+2m\theta}}{(yq^{1/2}e^{2ki \theta};q)_{\infty}
 (zq^{1/2}e^{-2ik\theta};q)_{\infty}} d \theta \nonumber \\
& =\sqrt{\pi} e^{m^2} \frac{(-yqe^{2mki};q)_{\infty}  (-zqe^{-2mki};q)_{\infty}}{(yzq;q)_{\infty}}.
\end{align}
\end{proposition}

The following Theorem \ref{RamanujanBeta} is a generalization of the Proposition \ref{6pro3}.
\begin{theorem}\label{RamanujanBeta}
For $m\in \mathbb{R}$ and $\alpha>-1$, $0<q=e^{-2k^2}<1$ and $|yzq|<1$, we have
\begin{align*}
 & \int_{-\infty}^{+\infty} \frac{e^{-\theta^2+2m\theta}}{(zq^{1/2}e^{-2ik\theta};q)_{\infty}}
\s \frac{L_n^{(\alpha)}(x,y) h_n(-qe^{2mki},q^{1/2}e^{2ki \theta})}{(q;q)_n} d \theta \\
&=\sqrt{\pi} e^{m^2} \frac{(-zqe^{-2mki};q)_{\infty}}{(yzq;q)_{\infty}}
 {}_0\phi_1\left( \begin{gathered} - \\ q^{ \alpha+1 } \end{gathered} ;\,q, -q^{\alpha+2} xz \right).
\end{align*}
\end{theorem}

\begin{proof}
We use $f(x,y)$ to represent the left-hand side of the equation in Theorem \ref{RamanujanBeta}. Obviously, $f(x,y)$ is analytic near $(0,0)\in \mathbb{C}^2$. It is evident from (\ref{3Eq1}) that $f(x,y)$ satisfies
\begin{align*}
 \D_{q,x} (1- q^{\alpha} \eta_x) \left\{ f(x,y) \right\}=-q^{\alpha+1} \eta^2_x  \D_{q,y}  \left\{ f(x,y) \right\}.
\end{align*}
According to Theorem \ref{thLn}, there exists a sequence $\{\la_n\}$ independent of $x$ and $y$ such that
\begin{align}\label{6eq12}
\sqrt{\pi} e^{m^2} \frac{(-zqe^{-2mki};q)_{\infty}}{(yzq;q)_{\infty}}
 {}_0\phi_1\left( \begin{gathered} - \\ q^{ \alpha+1 } \end{gathered} ;\,q, -q^{\alpha+2} xz \right)=\s \lambda_n L_n^{(\alpha)}(x,y).
\end{align}
By letting $x=0$ in the above equation and using $L_n^{(\alpha)}(0,y)=y^n$, we can derive that
\begin{align}\label{6eq10}
\sqrt{\pi} e^{m^2} \frac{(-zqe^{-2mki};q)_{\infty}}{(yzq;q)_{\infty}}=\s \lambda_n y^n.
\end{align}
Next, by using equations (\ref{6gen1}) and (\ref{6eq15}),
\begin{align}
\sqrt{\pi} e^{m^2} \frac{(-zqe^{-2mki};q)_{\infty}}{(yzq;q)_{\infty}}&=\frac{1}{ (-yqe^{2mki};q)_{\infty} }
\int_{-\infty}^{+\infty} \frac{e^{-\theta^2+2m\theta} }{ (yq^{1/2}e^{2ik\theta}, zq^{1/2}e^{-2ik\theta} ;q)_{\infty} } d \theta \nonumber \\
&=\s \int_{-\infty}^{+\infty} \frac{e^{-\theta^2+2m\theta}}{(zq^{1/2}e^{-2ik\theta} ;q)_{\infty}}
\frac{ h_n(-qe^{2mki},q^{1/2}e^{2ki \theta}) }{(q;q)_n} d\theta  y^n.  \label{6eq11}
\end{align}
Then comparing the $y^n$ coefficients of (\ref{6eq10}) and (\ref{6eq11}), we can obtain
\begin{align*}
\lambda_n=\int_{-\infty}^{+\infty} \frac{e^{-\theta^2+2m\theta}}{(zq^{1/2}e^{-2ik\theta} ;q)_{\infty}}
\frac{ h_n(-qe^{2mki},q^{1/2}e^{2ki \theta}) }{(q;q)_n} d\theta.
\end{align*}
Finally, substitute the above equation into (\ref{6eq12}) to complete the proof.
\end{proof}

\begin{remark}
When $x=0$, Theorem \ref{RamanujanBeta} degenerates into Proposition \ref{6pro3}. In the later Theorem \ref{7th3}, we will give Theorem \ref{RamanujanBeta} an equivalent form.
\end{remark}

Now, we shall present some applications of Theorems \ref{thLn} and \ref{th2} in $q$-integral. The Jackson $q$-integral of the function $f(x)$ from $a$ to $b$ is defined as
\begin{align}\label{q-integral}
\int_a^b f(x)d_qx=(1-q)\s [bf(bq^n)-af(aq^n)]q^n.
\end{align}
If $f$ is continuous on $(a,b)$, then it is easily seen that
\begin{align*}
\lim_{q\to1} \int_a^b f(x)d_qx=\int_a^b f(x)dx.
\end{align*}
The famous Andrews-Askey integral formula \cite[Theorem 1]{AA1981} can be stated in the following proposition.
\begin{proposition}\label{6pro1}
For $\max \{ |bu|,|bv|,|cu|,|cv| \}<1$, we have
\begin{align*}
\int_u^v  \frac{ (qx/u,qx/v;q)_{\infty} }{ (bx,cx;q)_{\infty} }  d_qx
=\frac{(1-q)v(q,u/v,qv/u,bcuv;q)_{\infty}}{(bu,bv,cu,cv;q)_{\infty}}.
\end{align*}
\end{proposition}
In \cite[Theorem 4.4]{L2015}, Liu extended Proposition \ref{6pro1} and proved the following $q$-integral formula.
\begin{proposition}\label{6pro2}
If there are no zero factors in the denominator of the integral, we have
\begin{align*}
& \int_u^v \frac{(qx/u,qx/v,\beta ax;q)_{\infty} }{ (ax,bx,cx,dx;q)_{\infty} } d_qx \\
& =\frac{(1-q)v (q,u/v,qv/u,cduv;q)_{\infty} }{(cu,cv,du,dv;q)_{\infty} }
\s \frac{W_n(c,d,u,v) \Psi_n^{(\beta)}(a,b)}{ (q;q)_n }
\end{align*}
with
\begin{align}\label{Wn}
W_n(a,b,u,v)=\sum_{k=0}^n \begin{bmatrix} n \\k \end{bmatrix}
\frac{ (av,bv;q)_k }{ (abuv;q)_k } u^k v^{n-k}.
\end{align}
\end{proposition}
The main results of this section is the following Theorems \ref{6th1} and \ref{6th2}.

\begin{theorem}\label{6th1}
 For $\max \{ |cu|,|cv|,|du|,|dv|,|bzu|,|bzv| \}<1$, we have
\begin{align*}
& \int_u^v  \frac{ (qx/u,qx/v;q)_{\infty} }{ (cx,dx;q)_{\infty} }
\T(\beta,y; \D_{q,z}) \left\{ \frac{1}{(bzx;q)_{\infty}} {}_0\phi_1\left( \begin{gathered} - \\ q^{ \alpha+1 } \end{gathered} ;\,q, -q^{\alpha+1} azx \right) \right\} d_qx \\
&=\frac{(1-q)v(q,u/v,qv/u,cduv;q)_{\infty}}{(cu,cv,du,dv;q)_{\infty}}
\s \frac{ W_n(c,d,u,v) \Psi_n^{(\beta)}(y,z)}
{ (q;q)_n } L_n^{(\alpha)}(a,b)
\end{align*}
with
\begin{align*}
\T(\beta,y;\D_{q,z})= \sum_{k=0}^{\infty} \frac{(\beta;q)_k}{(q;q)_k} (y\D_{q,z})^k,
\end{align*}
it is called the Cauchy augmentation operator \cite[(1.2)]{CG2008}.
\end{theorem}

\begin{proof}
We use $I(a,b)$ to represent the left-hand side of the equation in Theorem \ref{6gen1}. Therefore,
\begin{align}\label{6eq8}
I(a,b)=\T(\beta,y; \D_{q,z}) \int_u^v  \frac{ (qx/u,qx/v;q)_{\infty} }{ (cx,dx,bzx;q)_{\infty} }
{}_0\phi_1\left( \begin{gathered}- \\ q^{ \alpha+1 } \end{gathered} ;\,q, -q^{\alpha+1} azx \right) d_qx.
\end{align}
It is obvious that (\ref{6eq8}) is analytic near $(0,0)\in \mathbb{C}^2$ for $\max \{ |cu|,|cv|,|du|,|dv|,|bzu|,|bzv| \}<1$. By (\ref{3Eq1}) and (\ref{Psi}), and (\ref{6eq8}) can be rewritten as
\begin{align}\label{5eq2}
I(a,b)&= \int_u^v  \frac{ (qx/u,qx/v;q)_{\infty} }{ (cx,dx;q)_{\infty} }
\T(\beta,y; \D_{q,z}) \left\{  \s \frac{ L_n^{(\alpha)}(a,b)}{(q;q)_n} (xz)^n \right\} d_qx \nonumber \\
&=\int_u^v  \frac{ (qx/u,qx/v;q)_{\infty} }{ (cx,dx;q)_{\infty} } \s \frac{ L_n^{(\alpha)}(a,b) \Psi_n^{(\beta)}(y,z)}{(q;q)_n}x^n d_qx.
\end{align}
By the definition of $q$-integral, it can be seen that (\ref{5eq2}) is a linear combination of $L_n^{(\alpha)}(a,b)$. Since $\D_q$ is a difference operator and combined with Proposition \ref{proposition1}, we can easily obtain
\begin{align*}
 \D_{q,a} (1- q^{\alpha} \eta_a) \left\{ I(a,b) \right\}
=-q^{\alpha+1} \eta^2_a  \D_{q,b}  \left\{ I(a,b) \right\}.
\end{align*}
Then by Theorem \ref{thLn}, there exists a sequence $\{\la_n\}$ independent of $a$ and $b$ for which
\begin{align}\label{6eq1}
\int_u^v  \frac{ (qx/u,qx/v;q)_{\infty} }{ (cx,dx;q)_{\infty} } \s
\frac{ L_n^{(\alpha)}(a,b) \Psi_n^{(\beta)}(y,z)}{(q;q)_n}x^n d_qx
=\s \la_n L_n^{(\alpha)}(a,b).
\end{align}
Putting $a=0$ in the above equation, using $L_n^{(\alpha)}(0,b)=b^n$ and (\ref{6gen1}), we find that
\begin{align}\label{6eq6}
I(0,b)&=\int_u^v  \frac{ (qx/u,qx/v;q)_{\infty} }{ (cx,dx;q)_{\infty} } \s
\frac{ \Psi_n^{(\beta)}(y,z)}{(q;q)_n}(bx)^n d_qx \nonumber \\
&=\int_u^v  \frac{ (qx/u,qx/v;q)_{\infty} }{ (cx,dx;q)_{\infty} }
 \frac{(\beta ybx;q)_{\infty} }{ (ybx,zbx;q)_{\infty} }
 d_qx=\s \la_n b^n.
\end{align}
Next, letting $a\to yb$ and $b\to zb$ in Proposition \ref{6pro2} yields
\begin{align*}
 & \int_u^v  \frac{ (qx/u,qx/v,\beta ybx;q)_{\infty} }{ (ybx,zbx,cx,dx;q)_{\infty} } d_qx \\
&=\frac{(1-q)v(q,u/v,qv/u,cduv;q)_{\infty} }
{(cu,cv,du,dv;q)_{\infty}} \s \frac{W_n(c,d,u,v)\Psi_n^{(\beta)}(y,z) }{(q;q)_n} b^n.
\end{align*}
By combining the above $q$-integral with (\ref{6eq6}) and equating the coefficients of $b^n$, we obtain
\begin{align*}
\la_n=\frac{(1-q)v(q,u/v,qv/u,cduv;q)_{\infty}}
{(cu,cv,du,dv;q)_{\infty} } \frac{W_n(c,d,u,v)\Psi_n^{(\beta)}(y,z) }{(q;q)_n}.
\end{align*}
Substituting the above equation into (\ref{6eq1}), Theorem \ref{6th1} follows.
\end{proof}

\begin{remark}\label{6remark}
(1) When $a=b=y=z=0$, Theorem \ref{6th1} immediately reduces to the Proposition \ref{6pro1}, so Theorem \ref{6th1} is really an extension of the Andrews-Askey integral.

(2) When $a=0$ and $b=1$, Theorem \ref{6th1} becomes Proposition \ref{6pro2}.

(3) When $y=0$, $z=1$ and combining (\ref{3Eq1}), we obtain
\begin{align}
& \int_u^v  \frac{ (qx/u,qx/v;q)_{\infty} }{ (bx,cx,dx;q)_{\infty} }
{}_0\phi_1\left( \begin{gathered} - \\ q^{ \alpha+1 } \end{gathered}
;\,q, -q^{\alpha+1} ax \right)  d_qx \nonumber \\
&=\frac{(1-q)v(q,u/v,qv/u,cduv;q)_{\infty}}{(cu,cv,du,dv;q)_{\infty}}
\s \frac{ W_n(c,d,u,v)} { (q;q)_n } L_n^{(\alpha)}(a,b). \label{6eq7}
\end{align}

(4) Setting $d=0$ in (\ref{6eq7}) and noticing that $W_n(c,0,u,v)=\Psi^{(cv)}_n(u,v)$. We immediately obtain following corollary.
\begin{corollary}\label{6Cor1}
For $\max \{ |cu|,|cv|,|bu|,|bv| \}<1$, we have
\begin{align*}
& \int_u^v  \frac{ (qx/u,qx/v;q)_{\infty} }{ (bx,cx;q)_{\infty} }
{}_0\phi_1\left( \begin{gathered} - \\ q^{ \alpha+1 } \end{gathered}
;\,q, -q^{\alpha+1} ax \right)  d_qx \\
&=\frac{(1-q)v(q,u/v,qv/u;q)_{\infty}}{(cu,cv;q)_{\infty}}
\s \frac{ \Psi^{(cv)}_n(u,v) L_n^{(\alpha)}(a,b) } { (q;q)_n } .
\end{align*}
\end{corollary}
\end{remark}

\begin{theorem}\label{6th2}
For $\max \{|au|,|av|,|cu|,|cv|,|du|,|dv|,|\alpha q|,|\beta q| \}<1$, we have
\begin{align*}
& \int_u^v  \frac{ (qx/u,qx/v;q)_{\infty} }{ (cx,dx;q)_{\infty} }
{}_0\phi_1\left(
\begin{gathered} - \\ \alpha q \end{gathered} ;\,q, -\alpha qax \right)
{}_2\phi_1\left(
\begin{gathered} b/a, - \\ \beta q \end{gathered} ;\,q, -ax \right)  d_qx \\
&= \frac{(1-q)v (q,u/v,qv/u,dcuv;q)_{\infty} }{(du,dv,cu,cv;q)_{\infty} }
\s \frac{q^{n(n-1)/2}W_n(d,c,u,v) }{ (\beta q;q)_n (q;q)_n }
 p_n^{(\alpha,\beta)}(a,b).
\end{align*}
\end{theorem}

\begin{proof}
We use $I(a,b)$ to represent the left-hand side of the equation in Theorem \ref{6th2}. It follows from
Corollary \ref{5corollary} that
\begin{align}
& \D_{q,a}  (1-\alpha \eta_a) \left\{ {}_0\phi_1\left(
\begin{gathered} - \\ \alpha q \end{gathered} ;\,q, -\alpha qax \right)
{}_2\phi_1\left( \begin{gathered} b/a, - \\ \beta q \end{gathered} ;\,q, -ax \right) \right\} \nonumber \\
&=-q \D_{q,b}   (\eta^{-1}_b-q \alpha \beta \eta^2_a )
\left\{ {}_0\phi_1\left( \begin{gathered} - \\ \alpha q \end{gathered} ;\,q, -\alpha qax \right) {}_2\phi_1\left(
\begin{gathered} b/a, - \\ \beta q \end{gathered} ;\,q, -ax \right) \right\}.
\end{align}
Then combined with the definition of $I(a,b)$, one can easily obtain
\begin{align*}
\D_{q,a}  (1-\alpha \eta_a) \left\{ I(a,b) \right\}
=-q \D_{q,b}   (\eta^{-1}_b-q \alpha \beta \eta^2_a ) \left\{ I(a,b) \right\}.
\end{align*}
Clearly, $I(a,b)$ is analytic near $(0,0)\in \mathbb{C}^2$. By Theorem \ref{th2}, there exists a sequence $\{\la_n\}$ independent of $a$ and $b$ for which
\begin{align}\label{6eq21}
\int_u^v  \frac{ (qx/u,qx/v;q)_{\infty} }{ (cx,dx;q)_{\infty} }
{}_0\phi_1\left(
\begin{gathered} - \\ \alpha q \end{gathered} ;\,q, -\alpha qax \right)
{}_2\phi_1\left(
\begin{gathered} b/a, - \\ \beta q \end{gathered} ;\,q, -ax \right)  d_qx
=\s \la_n p_n^{(\alpha,\beta)}(a,b).
\end{align}
Putting $a=0$ in (\ref{6eq21}), using $p_n^{(\alpha,\beta)}(0,b)=b^n$, we obtain
\begin{align}\label{6eq4}
I(0,b)&=\int_u^v \s \frac{q^{n(n-1)/2} (bx)^n}{ (\beta q;q)_n (q;q)_n } \frac{ (qx/u,qx/v;q)_{\infty} }{ (cx,dx;q)_{\infty} } d_qx \nonumber \\
&=\s \frac{q^{n(n-1)/2} b^n}{ (\beta q;q)_n (q;q)_n }
\int_u^v \frac{ x^n(qx/u,qx/v;q)_{\infty} }{ (cx,dx;q)_{\infty} } d_qx
=\s \la_n b^n.
\end{align}
We note that interchange the order of summation and the $q$-integral in (\ref{6eq4}) is reasonable, since
\begin{align*}
\s \frac{q^{n(n-1)/2} b^n}{ (\beta q;q)_n (q;q)_n }
\quad \text{and} \quad
\int_u^v \frac{ x^n(qx/u,qx/v;q)_{\infty} }{ (cx,dx;q)_{\infty} } d_qx
\end{align*}
can easily infer that they are converges absolutely and uniformly by using the ratio test. Then by $q$-integral \cite[Proposition 4.3]{L2015}:
\begin{align*}
\int_u^v \frac{ x^n(qx/u,qx/v;q)_{\infty} }{ (bx,cx;q)_{\infty} } d_qx
=\frac{(1-q)v (q,u/v,qv/u,bcuv;q)_{\infty} }{(bu,bv,cu,cv;q)_{\infty} } W_n(b,c,u,v).
\end{align*}
Replacing $b$ by $d$ in above equation yield
\begin{align*}
\int_u^v \frac{ x^n(qx/u,qx/v;q)_{\infty} }{ (dx,cx;q)_{\infty} } d_qx
=\frac{(1-q)v (q,u/v,qv/u,dcuv;q)_{\infty} }{(du,dv,cu,cv;q)_{\infty} } W_n(d,c,u,v).
\end{align*}
Substituting the above equation into (\ref{6eq4}), we have
\begin{align*}
I(0,b)= \s \frac{q^{n(n-1)/2} b^n}{ (\beta q;q)_n (q;q)_n }
\frac{(1-q)v (q,u/v,qv/u,dcuv;q)_{\infty} }{(du,dv,cu,cv;q)_{\infty} } W_n(d,c,u,v)=\s \la_n b^n.
\end{align*}
Equating the coefficients of $b^n$ on both sides of the above equation, we obtain
\begin{align*}
\la_n= \frac{(1-q)v (q,u/v,qv/u,dcuv;q)_{\infty} }{(du,dv,cu,cv;q)_{\infty} }
 \frac{q^{n(n-1)/2}W_n(d,c,u,v) }{ (\beta q;q)_n (q;q)_n }.
\end{align*}
Finally, substituting the above equation into (\ref{6eq21}) and Theorem \ref{6th2} follows.
\end{proof}

\begin{remark}
(1) When $a=b=0$, Theorem \ref{6th2} immediately reduces to the Andrews-Askey integral.

(2) Setting $d=0$ in Theorem \ref{6th2}, we immediately obtain the following corollary.
\begin{corollary}\label{6Cor2}
For $\max \{ |cu|,|cv|,|\alpha q|,|\beta q| \}<1$, we have
\begin{align*}
& \int_u^v  \frac{ (qx/u,qx/v;q)_{\infty} }{ (cx;q)_{\infty} }
{}_0\phi_1\left(
\begin{gathered} - \\ \alpha q \end{gathered} ;\,q, -\alpha qax \right)
{}_2\phi_1\left(
\begin{gathered} b/a, - \\ \beta q \end{gathered} ;\,q, -ax \right)  d_qx \\
&= \frac{(1-q)v (q,u/v,qv/u;q)_{\infty} }{(cu,cv;q)_{\infty} }
\s \frac{q^{n(n-1)/2} }{ (\beta q;q)_n (q;q)_n }
 \Psi^{(cv)}_n(u,v) p_n^{(\alpha,\beta)}(a,b).
\end{align*}
\end{corollary}
\end{remark}

\section{Operator representation of $q$-Laguerre polynomials} \label{Sec5}
We can see that using $q$-partial differential equations is an effective method to study $q$-orthogonal polynomials. Of course, there are also other methods available for studying $q$-orthogonal polynomials. For example, Liu \cite{Liu2022} revealed the essential feature of the Rogers-Szeg\H{o} polynomials by utilizing the method of operators. Naturally, how do operators represent $q$-Laguerre polynomials? For this purpose, we define
\begin{align}\label{wideL1}
L_n^{(\alpha)}(x) \equiv \sum_{k=0}^n  \begin{bmatrix} n \\k \end{bmatrix}
\frac{ q^{k^2+k \alpha} }{ (q^{\alpha+1};q)_k  } (-x)^k=L_n^{(\alpha)}(x,1).
\end{align}
Obviously,
\begin{align}\label{7eq5}
y^n L_n^{(\alpha)}(x/y)=L_n^{(\alpha)}(x,y).
\end{align}
This indicates that $L_n^{(\alpha)}(x)$ and $L_n^{(\alpha)}(x,y)$ can be transformed into each other through variable substitution. The following
Proposition \ref{operator} provides an operator representation of the $q$-Laguerre polynomials $L_n^{(\alpha)}(x)$.
\begin{proposition}\label{operator}
Define operators $\lambda_{\alpha} \{f(\alpha)\}=f(\alpha+1)$ and
\begin{align}\label{operator1}
\Delta_{x,\alpha}= \eta_x-\frac{x q^{\alpha+1}}{1-q^{\alpha+1}} \lambda_{\alpha} \eta_x.
\end{align}
Then we have
\begin{align}\label{wideL}
\Delta_{x,\alpha}^n \{1\}= L_n^{(\alpha)}(x).
\end{align}
\end{proposition}

\begin{proof}
Cigler \cite{Cigler} showed that if $x$ and $y$ are indeterminates such that $xy=qyx$, $q$ commutes with $x$ and $y$, and the associative law holds, then
\begin{align*}
(x+y)^n=\sum_{k=0}^n \begin{bmatrix} n \\k \end{bmatrix} y^k x^{n-k}.
\end{align*}
It follows from the above equation that
\begin{align}\label{Delta}
\Delta_{x,\alpha}^n
=\left( \eta_x-\frac{x q^{\alpha+1}}{1-q^{\alpha+1}} \lambda_{\alpha} \eta_x \right)^n
=\sum_{k=0}^n \begin{bmatrix} n \\k \end{bmatrix} \frac{ q^{k^2+k \alpha} (-x)^k }{(q^{\alpha+1};q)_k}
\lambda_{\alpha}^k \eta_x^n.
\end{align}
Both sides of the above equation acting on the function $f(x) \equiv1$, and then combine with (\ref{wideL1}) to immediately complete the proof.
\end{proof}

We should point out that the operator (\ref{wideL}) is a powerful tool for calculating identities involving $q$-Laguerre polynomials $L_n^{(\alpha)}(x)$. For example, we prove the following bilinear generating function for $q$-Laguerre polynomials.
\begin{theorem}\label{bilinearL}
We have
\begin{align*}
\s \frac{ L^{(\alpha)}_n(x) L^{(\beta)}_n(y)}{(q;q)_n}t^n
&=\frac{1}{(t;q)_{\infty}} \s \sum_{k=0}^{\infty}
\frac{ q^{(n+k)^2+(n+k)\beta+k(k+\alpha)} (xyt)^k (-yt)^n}
{(q;q)_k (q;q)_n (q^{\alpha+1};q)_k (q^{\beta+1};q)_{n+k} } \\
&\quad \times {}_0\phi_1\left(
\begin{gathered} - \\ q^{ \alpha+k+1 } \end{gathered} ;\,q, -xtq^{\alpha+2k+n+1} \right).
\end{align*}
\end{theorem}

\begin{proof}
Appealing to the generating function for $q$-Laguerre polynomials $L_n^{(\alpha)}(x)$ in (\ref{3Eq1}) and then combining it with (\ref{wideL1}), we can obtain
\begin{align*}
\s \frac{ L^{(\beta)}_n(y) } {(q;q)_n} t^n
=\frac{1}{ (t;q)_{\infty} } {}_0\phi_1\left(
\begin{gathered} - \\ q^{ \beta+1 } \end{gathered} ;\,q, -q^{\beta+1} yt \right).
\end{align*}
If we replace $t$ by $t \Delta_{x,\alpha}$ in the above equation, we have
\begin{align}\label{7eq3}
\s \frac{ L^{(\beta)}_n(y) } {(q;q)_n} t^n \Delta_{x,\alpha}^n
= {}_0\phi_1\left( \begin{gathered} - \\ q^{ \beta+1 } \end{gathered} ;\,q, -q^{\beta+1} yt \Delta_{x,\alpha} \right) \frac{1}{ (t \Delta_{x,\alpha};q)_{\infty} }.
\end{align}
Both sides of the above operational equation acting $f(x) \equiv 1$ at same time,
\begin{align}\label{7eq4}
\s \frac{ L^{(\beta)}_n(y) } {(q;q)_n} t^n \Delta_{x,\alpha}^n 1
= {}_0\phi_1\left( \begin{gathered} - \\ q^{ \beta+1 } \end{gathered} ;\,q, -q^{\beta+1} yt \Delta_{x,\alpha} \right) \frac{1}{ (t \Delta_{x,\alpha};q)_{\infty} } 1.
\end{align}
It follows from (\ref{wideL}) that the left-hand side of the above equation becomes
\begin{align}\label{7LHS}
\s \frac{ L^{(\alpha)}_n(x) L^{(\beta)}_n(y)}{(q;q)_n}t^n.
\end{align}
Let RHS denote the right-hand side of the equation (\ref{7eq4}). Calculation shows that
\begin{align*}
{\rm RHS}&={}_0\phi_1\left( \begin{gathered} - \\ q^{ \beta+1 } \end{gathered} ;\,q, -q^{\beta+1} yt \Delta_{x,\alpha} \right) \sum_{m=0}^{\infty} \frac{t^m}{(q;q)_m} \Delta_{x,\alpha}^m \{1\} \\
&=\s \frac{((-1)^n q^{\binom n2})^2 (-q^{\beta+1} yt )^n}{(q;q)_n (q^{ \beta+1 };q)_n} \Delta_{x,\alpha}^n
\left\{ \sum_{m=0}^{\infty} \frac{ L^{(\alpha)}_m(x) } {(q;q)_m} t^m \right\} \\
&=\frac{1}{ (t;q)_{\infty} } \s \frac{((-1)^n q^{\binom n2})^2 (-q^{\beta+1} yt )^n}{(q;q)_n (q^{ \beta+1 };q)_n} \Delta_{x,\alpha}^n \left\{
 {}_0\phi_1\left(
\begin{gathered} - \\ q^{ \alpha+1 } \end{gathered} ;\,q, -q^{\alpha+1} xt \right) \right\}.
\end{align*}
Then use the formula (\ref{Delta}) yields
\begin{align*}
{\rm RHS}&=\frac{1}{ (t;q)_{\infty} } \s \frac{((-1)^n q^{\binom n2})^2 (-q^{\beta+1} yt )^n}{(q;q)_n (q^{ \beta+1 };q)_n} \\
& \quad \times \sum_{k=0}^n \begin{bmatrix} n \\k \end{bmatrix} \frac{ q^{k^2+k \alpha} (-x)^k }{(q^{\alpha+1};q)_k}
\lambda_{\alpha}^k \eta_x^n \left\{
 {}_0\phi_1\left(
\begin{gathered} - \\ q^{ \alpha+1 } \end{gathered} ;\,q, -q^{\alpha+1} xt \right) \right\} \\
&=\frac{1}{ (t;q)_{\infty} } \s \frac{((-1)^n q^{\binom n2})^2 (-q^{\beta+1} yt )^n}{(q;q)_n (q^{ \beta+1 };q)_n} \\
& \quad \times \sum_{k=0}^n \begin{bmatrix} n \\k \end{bmatrix} \frac{ q^{k^2+k \alpha} (-x)^k }{(q^{\alpha+1};q)_k} {}_0\phi_1\left(
\begin{gathered} - \\ q^{ \alpha+k+1 } \end{gathered} ;\,q, -q^{\alpha+k+n+1} xt \right),
\end{align*}
which is equivalent to the right-hand side of the equation in Theorem \ref{bilinearL} by interchanging the order of the summation.
\end{proof}

We find that the combination of the $q$-Laguerre polynomials and other orthogonal polynomials appears on the right-hand side of the $q$-integral (\ref{6eq7}). The following theorem will use the operator method to calculate their generating functions.

\begin{theorem}\label{7th2}
Let $W_n$ defined by (\ref{Wn}), then we have
\begin{align*}
\s \frac{ W_n(c,d,u,v) L_n^{(\alpha)}(x,y) } { (q;q)_n } t^n
&=\frac{1}{(yvt;q)_{\infty} }  \s \sum_{k=0}^{\infty} \frac{q^{k^2+k \alpha} (yut)^n (-xut)^k (cv,dv;q)_{n+k}}{(q,q^{\alpha+1};q)_k (q;q)_n(cduv;q)_{n+k}} \\
& \quad \times {}_0\phi_1\left( \begin{gathered} - \\ q^{ \alpha+k+1 } \end{gathered} ;\,q, -q^{\alpha+n+2k+1} xvt \right).
\end{align*}
\end{theorem}

\begin{proof}
In \cite[(4.10)]{C2014}, Cao proved that
\begin{align*}
\s  \frac{W_n(c,d,u,v) t^n}{(q;q)_n } =\frac{1}{(vt;q)_{\infty} }
{}_2\phi_1\left( \begin{gathered} cv, dv \\ cduv \end{gathered} ;\,q, ut \right).
\end{align*}
Letting $t \to t \Delta_{x,\alpha}$, we obtain
\begin{align*}
\s  \frac{W_n(c,d,u,v) t^n}{(q;q)_n } \Delta_{x,\alpha}^n
={}_2\phi_1\left( \begin{gathered} cv, dv \\ cduv \end{gathered} ;\,q, ut\Delta_{x,\alpha} \right)
\s \frac{(vt)^n}{(q;q)_n } \Delta_{x,\alpha}^n.
\end{align*}
Then acting constant one on both side yields
\begin{align}\label{7eq7}
& \s  \frac{W_n(c,d,u,v) t^n}{(q;q)_n } L_n^{(\alpha)}(x) \nonumber \\
&={}_2\phi_1\left( \begin{gathered} cv, dv \\ cduv \end{gathered} ;\,q, ut\Delta_{x,\alpha} \right)
 \s \frac{(vt)^n}{(q;q)_n } L_n^{(\alpha)}(x) \nonumber \\
&=\frac{1}{(vt;q)_{\infty} }  \s \frac{(cv,dv;q)_n (ut)^n}{(q,cduv;q)_n} \Delta_{x,\alpha}^n
\left\{ {}_0\phi_1\left( \begin{gathered} - \\ q^{ \alpha+1 } \end{gathered} ;\,q, -q^{\alpha+1} xvt \right) \right\}.
\end{align}
Now, using RHS to represent right-hand side of the above equation. Then by (\ref{Delta}), we obtain
\begin{align}\label{7eq8}
{\rm RHS}&=\frac{1}{(vt;q)_{\infty} }  \s \frac{(cv,dv;q)_n (ut)^n}{(q,cduv;q)_n}
\sum_{k=0}^n \begin{bmatrix} n \\k \end{bmatrix} \frac{ q^{k^2+k \alpha} (-x)^k }{(q^{\alpha+1};q)_k} \nonumber \\
& \quad \times \lambda_{\alpha}^k \eta_x^n
\left\{ {}_0\phi_1\left( \begin{gathered} - \\ q^{ \alpha+1 } \end{gathered} ;\,q, -q^{\alpha+1} xvt \right) \right\} \nonumber \\
&=\frac{1}{(vt;q)_{\infty} }  \s \frac{(cv,dv;q)_n (ut)^n}{(cduv;q)_n}
\sum_{k=0}^n \frac{ q^{k^2+k \alpha} (-x)^k }{(q,q^{\alpha+1};q)_k (q;q)_{n-k}} \nonumber \\
& \quad \times  {}_0\phi_1\left( \begin{gathered} - \\ q^{ \alpha+k+1 } \end{gathered} ;\,q, -q^{\alpha+n+k+1} xvt \right) \nonumber \\
&=\frac{1}{(vt;q)_{\infty} }  \s \sum_{k=0}^{\infty} \frac{q^{k^2+k \alpha} (ut)^n (-xut)^k (cv,dv;q)_{n+k}}{(q,q^{\alpha+1};q)_k (q;q)_n(cduv;q)_{n+k}} \nonumber \\
& \quad \times  {}_0\phi_1\left( \begin{gathered} - \\ q^{ \alpha+k+1 } \end{gathered} ;\,q, -q^{\alpha+n+2k+1} xvt \right).
\end{align}
Then by (\ref{7eq7}) and (\ref{7eq8}), we obtain
\begin{align*}
\s \frac{ W_n(c,d,u,v)} { (q;q)_n } L_n^{(\alpha)}(x) t^n
&=\frac{1}{(vt;q)_{\infty} }  \s \sum_{k=0}^{\infty} \frac{q^{k^2+k \alpha} (ut)^n (-xut)^k (cv,dv;q)_{n+k}}{(q,q^{\alpha+1};q)_k (q;q)_n(cduv;q)_{n+k}} \\
& \quad \times {}_0\phi_1\left( \begin{gathered} - \\ q^{ \alpha+k+1 } \end{gathered} ;\,q, -q^{\alpha+n+2k+1} xvt \right).
\end{align*}
By letting $x\to x/y$ and $t\to yt$ in the above equation, and combing it with (\ref{7eq5}), the proof is completed.
\end{proof}

From Theorems \ref{bilinearL} and \ref{7th2}, we see that operator calculation is very convenient, however, we can also calculate by other methods. For that, we introduce the general double basic hypergeometric series is defined as follows \cite[p. 282]{GR}
\begin{align}\label{7eq2}
\Phi^{A:B;C}_{D:E;F} \left[ \begin{gathered}
a_A: b_B; c_C \\ d_D: e_E; f_F \end{gathered} ;\,q; x, y \right]
=\sum_{m=0}^{\infty} \s \frac{(a_A;q)_{m+n} (b_B;q)_m (c_C;q)_n}
{(d_D;q)_{m+n} (q,e_E;q)_m (q,f_F;q)_n} \nonumber \\
\times \left[(-1)^{m+n} q^{\binom {m+n}{2}}\right]^{D-A}
\left[(-1)^m q^{\binom {m}{2}}\right]^{1+E-B}
\left[(-1)^n q^{\binom {n}{2}}\right]^{1+F-C}x^my^n,
\end{align}
where $a_A$ abbreviates the array of $A$ parameters $a_1, a_2, \cdots, a_A$, etc, and $q\neq0$ when $\min \{D-A, 1+E-B, 1+F-C\}<0$. The series (\ref{7eq2}) converges absolutely for $|x|, |y|<1$ when $\min \{D-A, 1+E-B, 1+F-C\}\geq0$ and $|q|<1$. The series (\ref{7eq2}) is called the $q$-Kamp\'{e} de F\'{e}riet series when $B=C$ and $E=F$.
\begin{theorem}\label{7Th1}
If $\max \{|uyt|, |vyt|\}<1$, then, we have
\begin{align*}
& \s \frac{ \Psi_n^{(\beta)}(u,v) L_n^{(\alpha)}(x,y) }{(q;q)_n}t^n \\
& =\frac{ (\beta uyt;q)_{\infty} }{ (uyt,vyt;q)_{\infty} }
\Phi^{0:2;1}_{2:1;0} \left[ \begin{gathered}
-:\beta, vyt; 0 \\ 0, q^{\alpha+1}: \beta uyt; - \end{gathered} ;\,q;
-xutq^{\alpha+1}, -xvtq^{\alpha+1} \right].
\end{align*}
\end{theorem}

\begin{proof}
First of all, applying the $q$-partial derivative operator $\D_{q,t}^k$ to act both sides of the equation (\ref{6gen1}), and then using the formula
(\ref{Dfg1}), we deduce that
\begin{align}\label{7eq1}
\s \frac{\Psi^{(\beta)}_{n+k}(u,v)}{(q;q)_n} t^n
=\frac{ (\beta ut;q)_{\infty} }{(ut,vt;q)_{\infty}}
\sum_{j=0}^k \begin{bmatrix} k \\j \end{bmatrix}
\frac{(\beta, vt;q)_j}{(\beta ut;q)_j} u^j v^{k-j}.
\end{align}
Let LHS to denote the left-hand side of the equation in Theorem \ref{7Th1}, we have
\begin{align*}
{\rm LHS}&=\s \frac{ \Psi^{(\beta)}_n(u,v) } { (q;q)_n }t^n
\sk (-1)^k \begin{bmatrix} n \\k \end{bmatrix}
\frac{  q^{k^2+k\alpha} }{ (q^{\alpha+1};q)_k }x^k y^{n-k} \\
&=\sum_{k=0}^{\infty} \sum_{n=k}^{\infty} \frac{(-1)^k t^n q^{k^2+k\alpha} \Psi^{(\beta)}_n(u,v)}
{(q;q)_k (q;q)_{n-k} (q^{\alpha+1};q)_k  } x^k y^{n-k} \\
&=\sum_{k=0}^{\infty} \frac{(-xt)^k q^{k^2+k\alpha} }
{(q,q^{\alpha+1};q)_k} \s \frac{\Psi^{(\beta)}_{n+k}(u,v)}{(q;q)_n} (yt)^n.
\end{align*}
Letting $t\to yt$ in (\ref{7eq1}), then substituting it into the above equation yields
\begin{align*}
{\rm LHS}&=\sum_{k=0}^{\infty} \frac{(-xt)^k q^{k^2+k\alpha} }
{(q,q^{\alpha+1};q)_k} \frac{ (\beta uyt;q)_{\infty} }{(uyt,vyt;q)_{\infty}}
\sum_{j=0}^k \begin{bmatrix} k \\j \end{bmatrix}
\frac{(\beta, yvt;q)_j}{(\beta yut;q)_j} u^j v^{k-j} \\
&=\frac{ (\beta uyt;q)_{\infty} }{(uyt,vyt;q)_{\infty}}
\sum_{j=0}^{\infty} \sum_{k=0}^{\infty}
\frac{ q^{(k+j)^2+(k+j)\alpha} (\beta, yvt;q)_j (-xtu)^j (-xtv)^k}{ (q^{\alpha+1};q)_{k+j} (q;q)_j (q;q)_k (\beta uyt;q)_j},
\end{align*}
which is equivalent to the right-hand side of the equation in Theorem \ref{7Th1} by series (\ref{7eq2}).
\end{proof}

\begin{remark}
(1) Letting $t\to1$, $x\to a$, $y\to b$ and $\beta\to cv$ in Theorem \ref{7Th1}, and then substituting that into the equation in Corollary \ref{6Cor1}, we obtain
\begin{align*}
& \int_u^v  \frac{ (qx/u,qx/v;q)_{\infty} }{ (bx,cx;q)_{\infty} }
{}_0\phi_1\left( \begin{gathered} - \\ q^{ \alpha+1 } \end{gathered}
;\,q, -q^{\alpha+1} ax \right)  d_qx \\
&=\frac{(1-q)v(q,u/v,qv/u,bcuv;q)_{\infty}}{(bu,bv,cu,cv;q)_{\infty}}
\Phi^{0:2;1}_{2:1;0} \left[ \begin{gathered}
-:cv, bv; 0 \\ 0, q^{\alpha+1}: bcuv; - \end{gathered} ;\,q;
-auq^{\alpha+1}, -avq^{\alpha+1} \right].
\end{align*}

(2) Letting  $\beta=0$ in Theorem \ref{7Th1}, and we immediately obtain the following corollary.
\begin{corollary}\label{7coro1}
If $\max \{|uyt|, |vyt|\}<1$, then, we have
\begin{align*}
 \s \frac{ h_n(u,v) L_n^{(\alpha)}(x,y) }{(q;q)_n}t^n
 =\frac{ 1 }{ (uyt,vyt;q)_{\infty} }
\Phi^{0:1;1}_{2:0;0} \left[ \begin{gathered}
-: vyt ; 0 \\ 0, q^{\alpha+1}:  - ; - \end{gathered} ;\,q;
-xutq^{\alpha+1}, -xvtq^{\alpha+1} \right].
\end{align*}
\end{corollary}
\end{remark}

Applying Corollary \ref{7coro1} to Theorem \ref{RamanujanBeta}, we immediately arrive at the following theorem. The proof will be omitted.
\begin{theorem}\label{7th3}
For $m\in \mathbb{R}$ and $\alpha > -1$, $0<q=e^{-2k^2}<1$ and $\max \{|xzq|,|yzq|\}<1$, we have
\begin{align*}
 & \int_{-\infty}^{+\infty} \frac{e^{-\theta^2+2m\theta}}{(yq^{1/2}e^{2ki \theta};q)_{\infty}  (zq^{1/2}e^{-2ik\theta};q)_{\infty}} \\
 &\quad \times
\Phi^{0:1;1}_{2:0;0} \left[ \begin{gathered}
-: yq^{1/2}e^{2ki \theta} ; 0 \\ 0, q^{\alpha+1}:  - ; - \end{gathered} ;\,q;
x e^{2mki} q^{\alpha+2} , -xv e^{2ki \theta}q^{\alpha+3/2} \right] d \theta \\
&=\sqrt{\pi} e^{m^2} \frac{(-yqe^{2mki};q)_{\infty}  (-zqe^{-2mki};q)_{\infty}}{(yzq;q)_{\infty}}
 {}_0\phi_1\left( \begin{gathered} - \\ q^{ \alpha+1 } \end{gathered} ;\,q, -q^{\alpha+2} xz \right).
\end{align*}
\end{theorem}

\section{Concluding remark}\label{Sec7}

1.  This article interprets homogeneous $q$-Laguerre polynomials and homogeneous little $q$-Jacobi polynomials mainly from the perspective of $q$-partial differential equations, providing a new method for studying these two $q$-orthogonal polynomials. This research method also belongs to Liu's theory of
$q$-partial differential equations. Meanwhile, we easily found the bilinear generating function of the $q$-Laguerre polynomials by using the newly constructed operator.

2.  The following $q$-integral formula \cite[Proposition 13.8]{L2013} with six parameters is a generalization of Andrews-Askey integral formula. It will be used later in the discussion.
\begin{proposition}\label{6proposition}
If $a,b,c,d,u,v,r$ are complex numbers such that $\max\{|au|,|bu|,|cu|,|av|,|bv|,\\|cv|,|abr/c|\}<1$ and $uv\neq0$, then we have the following $q$-integral formula:
\begin{align*}
\int_u^v \frac{(qx/u,qx/v,abrx;q)_{\infty} }{(ax,bx,cx;q)_{\infty} }d_qx
&=\frac{(1-q)v(q,u/v,qv/u;q)_{\infty} (acuv,bcuv,abr/c;q)_{\infty} }
{(au,av,bu,bv,cu,cv;q)_{\infty}} \\
& \quad \times {}_3\phi_2\left( \begin{gathered}
cu,cv,cuv/r \\ acuv, bcuv \end{gathered} ;\,q, \frac{abr}{c} \right).
\end{align*}
\end{proposition}
It follows from Proposition \ref{proposition1} and (\ref{Eq5}) that
\begin{align*}
&\D_{q,a} (1- q^{\alpha} \eta_a) \left\{ \frac{ (\gamma bx;q)_{\infty} }{ (bx;q)_{\infty} } {}_1\phi_2\left(
\begin{gathered} \gamma \\ q^{ \alpha+1 }, \gamma bx \end{gathered}
;\,q, -q^{\alpha+1} ax \right) \right\} \\
&=-q^{\alpha+1} \eta^2_a  \D_{q,b}  \left\{ \frac{ (\gamma bx;q)_{\infty} }{ (bx;q)_{\infty} } {}_1\phi_2\left(
\begin{gathered} \gamma \\ q^{ \alpha+1 }, \gamma bx \end{gathered}
;\,q, -q^{\alpha+1} ax \right) \right\}.
\end{align*}
For arbitrarily given $\gamma$, define the $q$-integral as follows
\begin{align*}
\varphi(a,b)\equiv \int_u^v  \frac{ (\gamma bx,qx/u,qx/v;q)_{\infty} }{ (bx,cx,dx;q)_{\infty} } {}_1\phi_2\left(
\begin{gathered} \gamma \\ q^{ \alpha+1 }, \gamma bx \end{gathered}
;\,q, -q^{\alpha+1} ax \right) d_qx.
\end{align*}
Clearly, letting $\gamma=0$ in function $\varphi(a,b)$, which degenerates to the left-hand side of the equation in the Remark \ref{6remark}(3). It is easy to show that $\varphi(a,b)$ is analytic near $(0,0)\in \mathbb{C}^2$ and satisfies the equation
\begin{align*}
 \D_{q,a} (1- q^{\alpha} \eta_a) \left\{ \varphi(a,b) \right\}
=-q^{\alpha+1} \eta^2_a  \D_{q,b}  \left\{ \varphi(a,b) \right\}.
\end{align*}
Then by Theorem \ref{thLn}, there exists a sequence $\{\Lambda_n\}$ independent of $a$ and $b$ for which
\begin{align}\label{6eq2}
\varphi(a,b)=\s \Lambda_n L_n^{(\alpha)}(a,b).
\end{align}
Putting $a=0$ in the above equation, using $L_n^{(\alpha)}(0,b)=b^n$, we find that
\begin{align}\label{6eq5}
\varphi(0,b)=\int_u^v  \frac{ (\gamma bx,qx/u,qx/v;q)_{\infty} }{ (bx,cx,dx;q)_{\infty} } d_qx =\s \Lambda_n b^n.
\end{align}
Then setting $a\to d$ and $r\to \gamma/d$ in Proposition \ref{6proposition}, then, we have
\begin{align*}
\int_u^v \frac{(qx/u,qx/v,\gamma bx;q)_{\infty} }{(dx,bx,cx;q)_{\infty} }d_qx
&=\frac{(1-q)v(q,u/v,qv/u;q)_{\infty} (dcuv,bcuv,\gamma b/c;q)_{\infty} }
{(du,dv,bu,bv,cu,cv;q)_{\infty}} \\
& \quad \times {}_3\phi_2\left( \begin{gathered}
cu,cv,cduv/\gamma \\ dcuv, bcuv \end{gathered} ;\,q, \frac{\gamma b}{c} \right).
\end{align*}
Substituting the above equation into (\ref{6eq5}), we find that sequence $\{\Lambda_n\}$ is determined by the following equation
\begin{align}\label{6eq3}
&\frac{(1-q)v(q,u/v,qv/u;q)_{\infty} (dcuv,bcuv,\gamma b/c;q)_{\infty} }
{(du,dv,bu,bv,cu,cv;q)_{\infty}} \nonumber \\
& \times {}_3\phi_2\left( \begin{gathered}
cu,cv,cduv/\gamma \\ dcuv, bcuv \end{gathered} ;\,q, \frac{\gamma b}{c} \right)=\s \Lambda_n b^n.
\end{align}
If we can obtain the expression of $\Lambda_n$ and then substituting it into (\ref{6eq2}), we can calculate the value of $q$-integral $\varphi(a,b)$. However, it seems that this is not an easy thing, so we left as
an open problem for the readers.

\section*{Acknowledgments}

The authors sincerely thank Associate Professor Min-Jie Luo from the Department of Mathematics and Statistics at Donghua University, as well as Professor Jian Cao from the School of Mathematics at Hangzhou Normal University for their valuable suggestions.

\section*{Conflict of interest}
The authors declare that they have no conflict of interest.

\end{document}